\theoremstyle{plain}
\newtheorem{thm}{Theorem}[section]		
\newtheorem{prop}[thm]{Proposition}
\newtheorem{cor}[thm]{Corollary}
\newtheorem{lem}[thm]{Lemma}
\newtheorem{clm}[thm]{Sublemma}
\theoremstyle{definition}
\newtheorem{df}{Definition}[section]
\newtheorem{exam}{Example}[section]
\theoremstyle{remark}
\newtheorem{rmk}{Remark}[section]
\newtheorem*{ac}{Acknowledgements}
\newcommand{\To}{\Longrightarrow}
\newcommand{\nn}{\mathbb{N}}
\newcommand{\zz}{\mathbb{Z}}
\newcommand{\qq}{\mathbb{Q}}
\newcommand{\rr}{\mathbb{R}}
\DeclareMathOperator{\card}{card}
\DeclareMathOperator{\cl}{CL}
\DeclareMathOperator{\pc}{PC}
\DeclareMathOperator{\nai}{INT}
\DeclareMathOperator{\tpc}{TPC}
\DeclareMathOperator{\kpc}{KPC}
\DeclareMathOperator{\cdim}{Cdim}
\DeclareMathOperator{\cov}{cov}
\DeclareMathOperator{\dom}{dom}
\begin{document}

\title[A characterization of metric subspaces]
{A characterization of metric subspaces of full Assouad dimension}
\author[Yoshito Ishiki]
{Yoshito Ishiki}
\address[Yoshito Ishiki]
{\endgraf
Graduate School of Pure and Applied Sciences
\endgraf
University of Tsukuba
\endgraf
Tennodai 1-1-1, Tsukuba, Ibaraki, 305-8571, Japan}
\email{ishiki@math.tsukuba.ac.jp}

\date{\today}
\subjclass[2010]{Primary 53C23; Secondary 28A80}
\keywords{Assouad dimension, Fractal}
\thanks{The author was supported by JSPS KAKENHI Grant Number 18J21300.}

\begin{abstract}
We introduce the notion of tiling spaces for metric spaces.
The class of tiling spaces contains 
the Euclidean spaces, 
the middle-third Cantor set, 
and various self-similar spaces appearing  in fractal geometry. 
For  doubling tiling spaces, 
we characterize  metric  subspaces whose Assouad dimension coincides with that of the whole space. 
\end{abstract}
\maketitle

\section{Introduction}
Fraser and  Yu \cite{FY} provided a characterization of
  subsets of
 a Euclidean space  whose Assouad dimension coincides with that of the whole space. 
Namely, 
they proved in \cite{FY} that 
for every subset  $F$ of  the $N$-dimensional Euclidean space $\rr^N$,  
the following are equivalent:
\begin{enumerate}
\item $F$ asymptotically contains arbitrary large arithmetic patches; 
\item $F$ satisfies the asymptotic Steinhaus property;
\item $\dim_AF=N$, where $\dim_A$ stands for the Assouad dimension;
\item $\cdim_AF=N$, where $\cdim_A$ stands for the conformal Assouad dimension;
\item $F$ has a weak tangent with non-empty interior;
\item the closed unit ball $B(0,1)$ in $\rr^N$ is a weak tangent to $F$. 
\end{enumerate}
They used this characterization to study the problem 
of whether specific subsets of Euclidean spaces related to number theory, 
such as the products of the set of all prime numbers, 
asymptotically contain higher dimensional arithmetic progression. 
Their results in \cite{FY} are related to the Erd\H{o}s--Tur\'an conjecture. 

The purpose of this paper is to generalize 
the Fraser--Yu characterization to general metric spaces. 
In the proof of the Fraser--Yu characterization, 
they essentially used the fact that Euclidean spaces are tiled by congruent cubes. 
From this point of view, 
we introduce the  notion of tiling spaces for metric spaces. 
The class of tiling spaces contains 
the Euclidean spaces, 
the middle-third Cantor set, 
and various self-similar spaces appearing in fractal geometry. 
For  doubling tiling spaces,
 we characterize 
  metric  subspaces  whose Assouad dimension coincides with that of the whole space.

We first define covering pairs as follows:
For a set $X$, 
we denote by $\cov(X)$ the set of all coverings of $X$. 
We call a map $P$ from $\nn$ or $\zz$ to $\cov(X)$ a \emph{covering structure on $X$}. 
We denote by $P_n$ the value of $P$ at $n$. 
For $T\in P_{n}$ and $k\in \nn$, 
we put $[T]_{k}=\{\, A\in P_{n+k}\mid A\subset T\, \}$.

We call a pair $(X, P)$ of a set $X$ and a covering structure $P$ on $X$   a \emph{covering pair}. 
We denote by $\dom(P)$ the domain of the map $P$. 
Note that $\dom(P)$ coincides with either  $\nn$ or $\zz$. 

\begin{df}[Tiling set]
Let $(X, P)$ be a covering pair. 
 For $N\in \nn$, we say that $(X, P)$ is an \emph{$N$-tiling set} if 
it satisfies the following: 
\begin{enumerate} 
\item[(S1)] for each pair  $n,m\in \dom(P)$ with $n<m$, and for each $A\in P_n$, 
we have $\card([A]_{m-n})=N^{m-n}$ 
and $A=\bigcup[A]_{m-n}$, 
where the symbol $\card$ stands for the cardinality;
\item[(S2)] for each $n\in \dom(P)$, 
and for each pair  $A, B\in P_n$, 
there exist $m\in \dom(P)$ and $C\in P_m$ such that $A\cup B\subset C$
and $m<n$.
\item[(S3)] for each $n\in \dom(P)$, for all $l, m\in \nn$,  and for each $A\in P_n$, we have 
\[
[A]_{m+l}=\bigcup_{T\in [A]_{m}}[T]_{l}. 
\]
 \end{enumerate}
 We say that $(X, P)$ is a \emph{tiling set} if 
 it is an $N$-tiling set for some $N$. 
 For a tiling set $(X,P)$, 
we say that a subset $T$ of $ X$ is a \emph{tile} of $(X, P)$ 
if there exists $n\in \dom(P)$ such that $T\in P_n$.

\end{df}
We next specialize the notion of tiling sets for metric spaces. 
For a metric space $X$ and for a subset $A$ of $X$, 
we denote by $\delta(A)$ the diameter of $A$. 
For $p\in X$ and $r\in (0, \infty)$, 
we  denote by $U(p, r)$ the open ball centered at $p$ with radius $r$. 
For $h\in (0, \infty)$, 
and for a metric space $X$ with metric $d_X$,  
we denote by $hX$ the metric space $(X, hd_X)$. 
\begin{df}[Tiling space]
Let $N\in \nn$ and $s\in (0,\infty)$. 
Let $X$ be a metric space. 
Let $(X, P)$ be an $N$-tiling set. 
We say that 
$(X, P)$ is an $(N, s)$-\emph{pre-tiling space} if 
it satisfies the following:
\begin{enumerate}
\item[(T1)] there exist $D_1>0$ and $D_2>0$ such that 
for each $n\in \dom(P)$ 
and for each $A\in P_n$, we have $D_1\le \delta(A)/s^n\le D_2$;  
\item[(T2)] there exists $E>0$ such that for each $n\in \dom(X)$ 
and for each $A\in P_n$, 
there exists a point $p_A\in A$ with $U(p_A, Es^m)\subset A$. 
 \end{enumerate}
Furthermore, 
we say that an $(N, s)$-pre-tiling space $(X,P)$ is an \emph{$(N, s)$-tiling space} if 
it satisfies: 
\begin{enumerate}
\item[(U)] for each countable sequence  $\{A_i\}_{i\in \nn}$ of tiles of $(X, P)$, 
there exists a subsequence $\{A_{\phi(i)}\}_{i\in \nn}$ such that 
$\{(\delta(A_{\phi(i)}))^{-1}A_{\phi(i)}\}_{i\in \nn}$  converges to $(\delta(T))^{-1}T$ for some tile $T$ of $(X, P)$ 
  in the sense of Gromov--Hausdorff. 
\end{enumerate}
We say that $(X, P)$ is a \emph{tiling} (resp. \emph{pre-tiling}) space  
if it is an $(N, s)$-tiling (resp.~$(N, s)$-pre-tiling) space for some $N$ and $s$. 
 \end{df}
 We say that two metric spaces $X$ and $Y$ are \emph{similar} 
if there exists $h\in (0, \infty)$ with $d_{GH}(hX, Y)=0$, where $d_{GH}$ is  the Gromov--Hausdorff distance. 
Similarity is  an equivalence relation on metric spaces. 
Note that every  metric space is similar to  its completion. 
 
Let $X$ be a metric space. 
Let $(X,P)$ be a tiling set.  
If the similarity classes of the tiles of $(X, P)$ is finite,
then the condition (U) is satisfied.  
Thus the condition (U) is considered as a generalization  of the finiteness of the similarity classes of tiles. 
There exists a pre-tiling space failing the condition (U) whose  tiles have infinite similarity classes (see Example \ref{exam:seqCan}). 
There exists a tiling space whose tiles have infinite similarity classes (see Example \ref{exam:infinite}). 
The $p$-adic numbers (Example \ref{exam:padic}) are non-Euclidean 
examples of tiling spaces.

 To state our main result, we use the notion of pseudo-cones introduced by 
 the author \cite{I}. 
Let $X$ be a metric space. 
Let $\{A_i\}_{i\in \nn}$ be a sequence of subsets of $X$,  
and let  $\{u_{i}\}_{i\in \nn}$ be a sequence in $(0, \infty)$. 
We say that a metric space $P$ is a \emph{pseudo-cone of $X$ approximated by 
$(\{A_i\}_{i\in \nn}, \{u_i\}_{i\in \nn})$} if 
$\lim_{i\to \infty}d_{GH}(u_iA_i, P)=0$.
We denote by $\pc(X)$ the class of all pseudo-cones,  
and by $\kpc(X)$ the class of all pseudo-cones approximated by 
a pair of  a sequence  $\{A_i\}_{i\in \nn}$ of compact sets of  $X$ and a sequence $\{u_i\}_{i\in \nn}$ in $(0, \infty)$. 
Let $(X,P)$ be a pre-tiling space, and let $F$ be a subset of $ X$. 
We also denote by $\tpc(F)$ the class of all pseudo-cones approximated by 
$(\{A_i\cap F\}_{i\in \nn}, \{u_i\}_{i\in \nn})$, 
where $\{A_i\}_{i\in\nn}$ is a sequence of tiles of $(X, P)$ and $\{u_i\}_{i\in\nn}$ is a sequence in $(0, \infty)$.
We emphasize that, in this paper, 
we use the Gromov--Hausdorff distance between not only compact metric spaces but also non-compact ones. 
Thus $d_{GH}$ is not necessarily a metric.

Let $(X,P)$ be a tiling set. 
Let $A$ be a tile of $(X,P)$. 
We say that a subset $F$ of $X$ satisfies  
the \emph{asymptotic Steinhaus property for $A$} if
for each $\epsilon>0$ and for each finite subset $S$ of $A$,
 there exist a finite subset $T$ of $ F$, 
 and 
 $\delta\in (0, \infty)$ such that  $d_{GH}(T, \delta\cdot S)\le \delta\cdot \epsilon$.

Our main result of this paper is the following characterization:
\begin{thm}\label{thm:tiling}
Let $(X, P)$ be a doubling tiling space. 
Then for every subset $F$ of $X$
the following are equivalent:
\begin{enumerate}
\item $\dim_AF=\dim_AX$; \label{item:dimful}
\item there exists a tile $A$ of $(X, P)$ such that $A\in \pc(F)$; \label{item:pc}
\item there exists a tile $A$ of $(X, P)$ such that  $A\in \tpc(F)$; \label{item:tpc}
\item there exists a tile $A$ of $(X, P)$ such that  $A\in \kpc(F)$;  \label{item:kpc}
\item there exists a tile $A$ of $(X, P)$ such that $F$ satisfies the asymptotic Steinhaus property for $A$. \label{item:asp}
\end{enumerate}
\end{thm}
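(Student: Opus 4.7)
The plan is to prove the cycle of implications
(\ref{item:dimful})$\Rightarrow$(\ref{item:asp})$\Rightarrow$(\ref{item:tpc})$\Rightarrow$(\ref{item:kpc})$\Rightarrow$(\ref{item:pc})$\Rightarrow$(\ref{item:dimful}). A preliminary observation, to be used throughout, is that every tile $A$ of $(X,P)$ satisfies $\dim_{A}A=\dim_{A}X=\log N/\log(1/s)$: the lower bound follows by iterating (S1) to produce $N^{k}$ sub-tiles of geometric ratio $s^{k}$, with (T1) and (T2) providing the separation needed to extract $N^{k}$ points pairwise $\sim s^{k}$-apart from a single tile, while the matching upper bound on $\dim_{A}X$ uses (S2) together with doubling.

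The implications among (\ref{item:pc})--(\ref{item:asp}) not involving (\ref{item:dimful}) are largely formal. Clearly $\tpc(F),\kpc(F)\subseteq\pc(F)$, which yields (\ref{item:tpc})$\Rightarrow$(\ref{item:pc}) and (\ref{item:kpc})$\Rightarrow$(\ref{item:pc}); the implication (\ref{item:tpc})$\Rightarrow$(\ref{item:kpc}) follows by replacing each $A_{i}\cap F$ by its closure, which is compact by doubling and (T1) and does not disturb the Gromov--Hausdorff limit. For (\ref{item:pc})$\Rightarrow$(\ref{item:dimful}), the standard monotonicity of the Assouad dimension under pseudo-cones (as in \cite{I}) gives $\dim_{A}A\le\dim_{A}F$, which combined with the preliminary observation forces $\dim_{A}F=\dim_{A}X$.

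The equivalence (\ref{item:tpc})$\Leftrightarrow$(\ref{item:asp}) is a standard finite-to-infinite translation. From (\ref{item:tpc})$\Rightarrow$(\ref{item:asp}), sample a given finite $S\subseteq A$ against an $\epsilon$-correspondence realizing $d_{GH}((\delta(A_{i}))^{-1}(A_{i}\cap F),(\delta(A))^{-1}A)<\epsilon$ for large $i$. Conversely, use doubling to extract a countable dense sequence $\{s_{j}\}$ in $A$, apply (\ref{item:asp}) to the initial segments $S_{n}=\{s_{1},\dots,s_{n}\}$ with $\epsilon_{n}\to 0$, enclose each resulting finite set $T_{n}\subseteq F$ in a tile $A_{n}$ of comparable diameter via (S2) and (T1), and invoke (U) to extract a Gromov--Hausdorff convergent subsequence whose limit is (the rescaled) tile $A$.

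The heart of the theorem is (\ref{item:dimful})$\Rightarrow$(\ref{item:asp}). Given a finite $S\subseteq A$ and $\epsilon>0$, the hypothesis $\dim_{A}F=\log N/\log(1/s)$ produces, at arbitrarily small scales $r\ll R$, a ball $B(x,R)$ in which the $r$-covering number of $F\cap B(x,R)$ is almost $(R/r)^{\log N/\log(1/s)}$. Enclose $B(x,R)$ in a tile $A'\in P_{m}$ with $\delta(A')\asymp R$ via (S2), subdivide $A'$ via (S1) into $N^{k}$ sub-tiles with $s^{k}\asymp\epsilon$, and run a generation-by-generation pigeonhole argument: any sub-tile at generation $m+j$ not meeting $F$ would subtract roughly $N^{k-j}$ from the covering number at scale $r$, so the near-extremal covering number forces the number of ``bad'' sub-tiles at each generation to be tiny. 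A Markov-type count then yields a sub-tile $A''$ whose every finest descendant meets $F$, so $F\cap A''$ is $\epsilon\delta(A'')$-dense in $A''$; rescaling by $\delta(A'')^{-1}$ gives the required Steinhaus approximation of $S$. The principal obstacle is executing this density-propagation purely from the axioms (S1), (S2), (T1), (T2) and doubling: one must use (T2) to ensure that distinct sub-tiles genuinely contribute to the covering number, and the doubling constant of $X$ to convert the Assouad lower bound on balls to a lower bound indexed by tiles.
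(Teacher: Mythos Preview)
Your counting idea for the hard direction is the right one and is essentially the computation in the paper's Lemma~\ref{lem:dd}, but the way you route the implications leaves two genuine gaps.

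In your argument for (\ref{item:dimful})$\Rightarrow$(\ref{item:asp}) you begin with a \emph{fixed} tile $A$ and a finite $S\subset A$, and then produce some sub-tile $A''$ (depending on $\epsilon$) in which $F$ is $\epsilon\,\delta(A'')$-dense. Rescaling gives $d_{GH}\bigl(\delta(A'')^{-1}(A''\cap F),\,\delta(A'')^{-1}A''\bigr)\le\epsilon$, but this approximates finite subsets of the \emph{rescaled tile $A''$}, not $S\subset A$. In a general tiling space the rescaled tiles $\delta(A'')^{-1}A''$ need not resemble $\delta(A)^{-1}A$ at all (Examples~\ref{exam:seqCan} and~\ref{exam:infinite}), so you cannot conclude the Steinhaus property for the given $A$. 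This is precisely where condition (U) must enter, and you never invoke it here: running your density argument for $\epsilon_{k}\to 0$ yields tiles $A''_{k}$, and (U) extracts a subsequence with $\delta(A''_{k})^{-1}A''_{k}\to\delta(T)^{-1}T$ for some tile $T$, whence $T\in\tpc(F)$. So your argument actually proves (\ref{item:dimful})$\Rightarrow$(\ref{item:tpc}), not (\ref{item:dimful})$\Rightarrow$(\ref{item:asp}). The paper organises the same content as the contrapositive: if no tile lies in $\tpc(F)$, then (U) gives a \emph{uniform} $k$ with $d_{GH}(B,B\cap F)>D_{2}s^{n+k}$ for \emph{every} $B\in P_{n}$ (Lemma~\ref{lem:condiU}), after which your counting yields $\dim_{A}F<\log N/\log(1/s)$ (Lemma~\ref{lem:dd}).

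Your sketch of (\ref{item:asp})$\Rightarrow$(\ref{item:tpc}) is also problematic. You propose to ``enclose each $T_{n}\subset F$ in a tile $A_{n}$ of comparable diameter via (S2) and (T1)'', but a bounded set of diameter $\asymp s^{m}$ need not sit inside a single tile of $P_{m}$ (it typically straddles several), and (S2) only supplies a common ancestor of uncontrolled level, hence uncontrolled diameter. Even granting such $A_{n}$, from $T_{n}\subset A_{n}\cap F\subset A_{n}$ you only get that any subsequential limit of $u_{n}(A_{n}\cap F)$ is squeezed between $A$ and the (possibly different) tile supplied by (U), which does not force convergence to a tile. The paper sidesteps this entirely by proving (\ref{item:asp})$\Leftrightarrow$(\ref{item:pc}) directly (Lemma~\ref{lem:check}) and closing the cycle through (\ref{item:dimful})$\Rightarrow$(\ref{item:tpc}); it never needs (\ref{item:asp})$\Rightarrow$(\ref{item:tpc}). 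A smaller point: your (\ref{item:tpc})$\Rightarrow$(\ref{item:kpc}) via closures requires completeness of $X$, which is not assumed; use finite nets instead (Proposition~\ref{prop:ttapprox}), as the paper does for (\ref{item:pc})$\Rightarrow$(\ref{item:kpc}).
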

In Theorem \ref{thm:tiling}, 
we need the assumption of  the doubling property for $X$, 
which is  equivalent to the finiteness of the Assouad dimension.
The definition of the doubling property can be seen in Section \ref{sec:pre}. 
There exists a  tiling space that is not doubling (see Example \ref{exam:nondb}). 

\begin{rmk}
Let $(X, P)$ be a tiling space. 
If $X$ is doubling, then for every tile $T\in P$ we have 
$\dim_AT=\dim_AX$ (see Corollary \ref{cor:dim}). 
If $X$ is not doubling, then the equality does not necessarily hold. 
For example, the tiling space constructed in Example \ref{exam:nondb} 
has infinite Assouad dimension, and possesses a tile of  finite  Assouad dimension. 
\end{rmk}

If a tiling space $(X, P)$ satisfies the assumption that  the conformal dimensions of 
all the tiles of $(X, P)$ and $X$ are equal to 
$\dim_AX$, 
 then the condition that  $\cdim_AF=\dim_AX$ is equivalent to the conditions (1)--(5) stated in Theorem \ref{thm:tiling}. 
The assumption mentioned  above seems to be  quite strong.
Indeed, 
the author does not  know an  example satisfying the assumption except  the Euclidean spaces. 
We do not deal with  the conformal dimensions of tiling spaces. 

Attractors of  iterated function systems on metric spaces are studied as canonical examples of  fractals, and their Hausdorff dimensions are investigated (see e.g., \cite[Chapter 9]{Fal}, \cite{Sch1},  and \cite{Sch2}).  
\begin{df}\label{def:ifs}
Let $X$ be a complete metric space. 
For $L\in (0, \infty)$, a map $f:X\to X$ is said to be an \emph{$L$-similar transformation on $X$} if 
for all $x, y\in X$ we have $d_X(f(x), f(y))=Ld_X(x,y)$.  
Let $N\ge 2$ and let $s\in (0,1)$. 
We say that  $\mathcal{S}$  is an \emph{$(N, s)$-similar iterated function system on $X$} if 
$\mathcal{S}$ consists of $N$ many $s$-similar transformations on $X$, 
say $\mathcal{S}=\{S_i\}_{i=0}^{N-1}$. 
A non-empty subset $F$ of $X$ is said 
to be an \emph{attractor of the iterated function system $\mathcal{S}$} if 
$F$ is compact and  $F=\bigcup_{i=0}^{N-1}S_i(F)$. 
Since $X$ is complete, 
an attractor of $\mathcal{S}$ always uniquely exists (see \cite[Chapter 9]{Fal} for the Euclidean setting). 
We write $A_{\mathcal{S}}$ as the attractor of $\mathcal{S}$. 
We say that the system $\mathcal{S}$ satisfies the \emph{strong open set condition} 
if there exists an open set $V$ of $X$ such that 
\begin{enumerate}
\item[(O1)] $\bigcup_{i=0}^{N-1}S_i(V)\subset V$;
\item[(O2)] $\{S_i(V)\}_{i=0}^{N-1}$ are pairwise disjoint;
\item[(O3)] $V\cap A_{\mathcal{S}}\neq\emptyset$. 
\end{enumerate}
Let $W$ be the set of all words generated by $\{0, \dots, N-1\}$. 
For each word $w=w_0\cdots w_l$,
we write $S_w=S_{w_l}\circ \cdots \circ S_{w_0}$, 
where each  $w_i$ belongs to $\{0, \dots, N-1\}$. 
We define a map $P_\mathcal{S}:\nn\to \cov(A_{\mathcal{S}})$ by
\begin{equation}\label{eq:ifs}
(P_\mathcal{S})_n=\{\, S_w(A_{\mathcal{S}})\mid \text{$w\in W $ and $|w|=n$}\, \}, 
\end{equation}
where $|w|$ stands for the length of the word $w$. 
\end{df}
 Similar iterated function systems provide us a plenty of  tiling spaces. 

\begin{thm}\label{thm:frac}
For $N\ge 2$ and $s\in (0, 1)$, 
let $\mathcal{S}$ be an $(N, s)$-similar iterated function system 
on a complete metric space with the strong open set condition. 
Let $A_{\mathcal{S}}$ be the attractor of $\mathcal{S}$, 
and 
$P_\mathcal{S}$ the map defined by \emph{(\ref{eq:ifs})}. 
Then the pair $(A_{\mathcal{S}}, P_\mathcal{S})$ is an $(N, s)$-tiling space. 
\end{thm}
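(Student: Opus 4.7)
The plan is to verify, one by one, the five defining conditions (S1), (S2), (T1), (T2), (U) of an $(N,s)$-tiling space. I would begin with the classical consequence of the tame open set condition that $A_{\mathcal{S}} \subset \overline{V}$. Fix $p_0 \in V \cap A_{\mathcal{S}}$; the orbit $\{S_w(p_0) : w \in W\}$ lies in $V$ by inductive application of (O1), and is dense in $A_{\mathcal{S}}$ because every point of $A_{\mathcal{S}}$ is a limit of orbit points taken from a nested sequence of level-$n$ tiles with vanishing diameter $s^n \delta(A_{\mathcal{S}})$. A useful consequence to record early is that $S_w(V)$ is disjoint from $\overline{S_{w'}(V)}$ for distinct same-level words $w \neq w'$ (an open set cannot meet the closure of a disjoint open set), which combined with $A_{\mathcal{S}} \subset \overline{V}$ yields the inclusion $A_{\mathcal{S}} \cap S_w(V) \subset S_w(A_{\mathcal{S}})$.

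From this, four of the five axioms become essentially routine. Condition (S2) holds because $A_{\mathcal{S}}$ itself lies in $P_0$ and contains every tile. The covering identity in (S1) follows by iterating the self-similarity $A_{\mathcal{S}} = \bigcup_i S_i(A_{\mathcal{S}})$, and distinctness of the $N^n$ tiles $\{S_w(A_{\mathcal{S}})\}_{|w|=n}$ comes from the observation of the previous paragraph: an identification $S_w(A_{\mathcal{S}}) = S_{w'}(A_{\mathcal{S}})$ with $w \neq w'$ at the same level would place $S_w(p_0)$ into $S_w(V) \cap \overline{S_{w'}(V)} = \emptyset$. Condition (T1) is immediate from the $s^n$-similarity of $S_w$, giving $\delta(S_w(A_{\mathcal{S}})) = s^n \delta(A_{\mathcal{S}})$ and allowing $D_1 = D_2 = \delta(A_{\mathcal{S}})$. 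Condition (U) is equally direct: every rescaled tile $\delta(S_w(A_{\mathcal{S}}))^{-1} S_w(A_{\mathcal{S}})$ is isometric to the normalized attractor $\delta(A_{\mathcal{S}})^{-1} A_{\mathcal{S}}$, so every sequence of rescaled tiles is Gromov--Hausdorff convergent (with vanishing distance) to this common rescaled tile.

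The technical heart of the proof is condition (T2). I would choose $p_0 \in V \cap A_{\mathcal{S}}$ and a radius $r_0 > 0$ with $B_Y(p_0, r_0) \subset V$, where $Y$ denotes the ambient complete metric space, set $p_w := S_w(p_0)$, and aim for $E := r_0$. Using the inclusion $A_{\mathcal{S}} \cap S_w(V) \subset S_w(A_{\mathcal{S}})$, the required containment $\{z \in A_{\mathcal{S}} : d_Y(z, p_w) < Es^n\} \subset S_w(A_{\mathcal{S}})$ reduces to the quantitative separation $d_Y(p_w, A_{\mathcal{S}} \setminus S_w(V)) \ge Es^n$. Decomposing $A_{\mathcal{S}} \setminus S_w(V)$ as $S_w(A_{\mathcal{S}} \setminus V) \cup \bigcup_{|w'|=n,\, w' \neq w} S_{w'}(A_{\mathcal{S}})$, the distance from $p_w$ to the first piece is exactly $s^n d_Y(p_0, A_{\mathcal{S}} \setminus V) \ge s^n r_0$ by the similarity of $S_w$. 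The distance to the second piece is the main obstacle; I anticipate controlling it by a descending induction on the length of the maximal common suffix of $w$ and $w'$, using the decomposition $S_w = S_v \circ S_u$ and the $s^{|v|}$-similarity of $S_v$ to reduce the estimate at level $n$ to uniform lower bounds at every lower level for pairs of words with distinct last letters, the base case of which should be absorbed by shrinking $r_0$ against the level-one separations from the orbit point $p_0$.
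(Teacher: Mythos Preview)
Your plan is correct and matches the paper's argument almost step for step: the paper isolates (S1)--(S2) as a lemma, records $A_{\mathcal S}\subset\overline{V}$ (and hence $S_w(A_{\mathcal S})\subset\overline{S_w(V)}$) as a separate lemma, and disposes of (T1) and (U) via the self-similarity of $(A_{\mathcal S},P_{\mathcal S})$, exactly as you propose.

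The only divergence is in (T2), where you are making the argument harder than necessary. You have already recorded that $S_w(V)\cap\overline{S_{w'}(V)}=\emptyset$ for distinct same-level words, and that $S_{w'}(A_{\mathcal S})\subset\overline{S_{w'}(V)}$. Since $U_Y(p_0,r_0)\subset V$ gives $U_Y(p_w,r_0s^n)=S_w\bigl(U_Y(p_0,r_0)\bigr)\subset S_w(V)$, this open ball is automatically disjoint from every $S_{w'}(A_{\mathcal S})$ with $w'\neq w$, and hence $A_{\mathcal S}\cap U_Y(p_w,r_0s^n)\subset S_w(A_{\mathcal S})$ directly. That is precisely the paper's proof of (T2): no induction on common suffixes, no shrinking of $r_0$, and no separate treatment of your ``second piece'' is required. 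Note also that your proposed base case (``level-one separations from $p_0$'') would not by itself control $d_Y(p_u,S_{u'}(A_{\mathcal S}))$ at arbitrary levels when the level-one tiles touch (as in the Sierpi\'nski gasket); what actually saves the estimate at every level is again the containment $U_Y(p_u,r_0s^{|u|})\subset S_u(V)$, i.e.\ the same one-line argument, so the induction collapses to the direct proof.
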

By Theorem \ref{thm:frac},  for instance, 
the middle-third Cantor set and the Sierpi\'nski gasket are 
tiling spaces for some suitable covering structures (see Subsection \ref{subs:attractor}), 
and we can apply Theorem \ref{thm:tiling} to them.

The organization of this paper is as follows: 
In Section \ref{sec:pre}, 
we review the definitions and basic properties of the Assouad dimension 
and the Gromov--Hausdorff distance. 
In Section \ref{sec:tiling}, 
we discuss basic properties of tiling spaces. 
In Section \ref{sec:Assouadtiling}, 
we prove Theorem \ref{thm:tiling}. 
In Section \ref{sec:exam},  
we show Theorem \ref{thm:frac},  
and  provide  tiling spaces induced from similar iterated function systems. 
In Section \ref{sec:counter},  
we exhibit counterexamples related to our characterization of tiling spaces. 
\begin{ac}
The author would like to thank Professor Koichi Nagano for his advice and constant encouragement. 
The author also would like to thank the referee  for 
helpful comments,  and for  essential suggestions on the references. 
\end{ac}

\section{Preliminaries}\label{sec:pre}
In this paper, 
we denote by $\nn$ the set of all non-negative integers. 
\subsection{Metric spaces}
Let $X$ be a metric space. 
The symbol $d_X$ stands for the metric of $X$. 
We denote by $B(x,r)$ (resp.~$U(x,r)$) the closed (resp.~open) ball centered at $x$ with radius $r$. 
For a subset $A$ of $X$, and for $r\in (0, \infty)$, 
we  denote by 
$B(A, r)$ the closed ball centered at $A$ with radius $r$ defined by  
 $B(A, r)
 =\{\,x\in X\mid \inf_{a\in A}d(x, a)\le r\, \}$. 
For a subset $A$ of $X$, 
we set  $\alpha(A)=\inf\{\, d_X(x, y)\mid \text{$x, y\in A$ and $x\neq y$}\, \}$. 
We say that $A$ is a \emph{separated set} if there exists $r\in (0, \infty)$
with $\alpha(A)\ge r$.

Let $p\in [1, \infty]$.  
For two metric spaces $X$ and $Y$, 
we denote by $X\times_pY$  the product metric space of $X$ and $Y$ 
with the $\ell^p$-product metric  $d_{X\times_pY}$ defined by 
\[
d_{X\times_p Y}((a,b), (c,d))=
\begin{cases}
(d_X(a,c)^p+d_Y(b,d)^p)^{1/p} &\text{if $p\in [1, \infty)$}, \\
\max\{d_X(a,c),\ d_Y(b,d)\} & \text{if $p=\infty$}. 
\end{cases}
\]
\subsection{Assouad dimension}

For $N\in \nn_{\ge 1}$, 
we say that a metric space $X$ is \emph{$N$-doubling} 
if for every bounded set $S$ of $X$, 
there exists a subset $F$ of $X$ such that 
$S\subset B(F, \delta(S)/2)$ and $\card(F)\le N$, 
where $B(F, \delta(S)/2)$ is 
the closed ball centered at $F$ with radius $\delta(S)/2$. 
A metric space is said to be \emph{doubling} 
if it is $N$-doubling for some $N$.\par

Let $X$ be a metric space. 
For a bounded set $S$ of $X$ and $r\in (0, \infty)$, 
we denote by $\mathcal{B}_X(S,r)$ the minimum integer $N$ such that 
$S$ can be covered by at most $N$ bounded sets with diameter at most $r$. 
We denote by $\mathscr{A}(X)$ the set of all
$\beta\in (0, \infty)$ 
for which there exists $C\in (0,\infty)$ such that
for every bounded set $S$ of $X$ and for every $r\in (0, \infty)$
we have $\mathcal{B}_X(S, r)\le C(\delta(S)/r)^{\beta}$. 
We also denote by $\mathscr{C}(X)$ the set of  all $\gamma\in (0,\infty)$ such that 
there exists $C\in (0, \infty)$ such that for every bounded set 
$S$ of $X$ and  for every separated subset  $M$ of $S$, 
we have $\card(M)\le C(\delta(S)/\alpha(M))^{\gamma}$.

The \emph{Assouad dimension} $\dim_AX$ of a metric space $X$ is defined as 
$\inf (\mathscr{A}(X))$
 if $\mathscr{A}(X)$ is non-empty;
otherwise, $\dim_A(X)=\infty$.

By the definitions, we have the next two propositions. 
\begin{prop}\label{prop:Assouaddim}
For every metric space $X$, the following are equivalent:
\begin{enumerate}
 \item  $X$ is doubling;
\item $\mathscr{A}(X)$ is non-empty;
\item  $\mathscr{C}(X)$ is non-empty;  
\item  $\dim_AX<\infty$. 
\end{enumerate}
\end{prop}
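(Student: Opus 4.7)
The equivalence (2)$\iff$(4) is immediate from the definition $\dim_A X = \inf \mathscr{A}(X)$, which is finite precisely when $\mathscr{A}(X)\neq\emptyset$. So the real work is the triangle (1)$\iff$(2)$\iff$(3), a standard circle of equivalent reformulations of the doubling property.

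For (1)$\To$(2) the plan is to iterate the doubling condition. If $X$ is $N$-doubling, a bounded set $S$ can be covered by $N$ sets of diameter at most $\delta(S)/2$, and inductively by $N^k$ sets of diameter at most $\delta(S)/2^k$. Given $r\in(0,\infty)$, choosing the smallest $k$ with $\delta(S)/2^k\le r$ yields $\mathcal{B}_X(S,r)\le N^k\le N\cdot (\delta(S)/r)^{\log_2 N}$, so $\log_2 N\in\mathscr{A}(X)$. Conversely for (2)$\To$(1), applying the $\mathscr{A}$-bound with $r=\delta(S)/2$ gives $\mathcal{B}_X(S,\delta(S)/2)\le C\cdot 2^\beta$, and hence $X$ is $\lceil C2^\beta\rceil$-doubling.

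For (2)$\To$(3), the key observation is that if $M\subset S$ is separated with $\alpha(M)=r>0$, any cover of $S$ by sets of diameter strictly less than $r$ must have at least $\card(M)$ members, since each such set contains at most one point of $M$. Hence $\card(M)\le \mathcal{B}_X(S,r/2)\le C(2\delta(S)/r)^{\beta}=C\cdot 2^\beta(\delta(S)/r)^\beta$, and $\beta\in\mathscr{C}(X)$. For (3)$\To$(2), I would pass from packings back to coverings: take a maximal $r$-separated subset $M\subset S$; by maximality the balls $B(x,r)$ with $x\in M$ cover $S$, and the $\mathscr{C}$-bound gives $\card(M)\le C(\delta(S)/r)^\gamma$. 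These balls have diameter $\le 2r$, so $\mathcal{B}_X(S,2r)\le C(\delta(S)/r)^\gamma=C\cdot 2^\gamma(\delta(S)/(2r))^\gamma$, which rewrites as a bound at every scale $r'=2r$, giving $\gamma\in\mathscr{A}(X)$.

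None of the steps is technically subtle; the only care needed is bookkeeping of scales when passing between $r$ and $r/2$ (or $2r$), to ensure the exponents $\beta,\gamma$ themselves transfer and are only the constants that absorb the scaling factors. The most substantive sub-step is the use of maximality in (3)$\To$(2) to turn a packing statement into a covering statement, but this is classical and immediate: any point not within distance $r$ of $M$ could be adjoined to $M$, contradicting maximality.
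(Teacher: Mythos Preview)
The paper offers no proof here; the proposition is simply recorded as following ``by the definitions''. Your argument is the standard cycle of implications and is correct.

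One point is worth flagging. In your step $(1)\Rightarrow(2)$ you assert that $S$ is covered by $N$ sets of diameter at most $\delta(S)/2$, but the paper's $N$-doubling condition only supplies $N$ closed balls of \emph{radius} $\delta(S)/2$, hence of diameter at most $\delta(S)$, so the scale does not literally halve at each step and the iteration as you wrote it does not go through. This is really a defect in the paper's phrasing rather than in your reasoning: read literally, that definition makes $\ell^\infty$ a $1$-doubling space (every bounded $S\subset\ell^\infty$ lies in the single ball of radius $\delta(S)/2$ centered at its coordinatewise midpoint) while $\dim_A\ell^\infty=\infty$, so the proposition would fail. Under the standard formulation of doubling (every ball of radius $r$ covered by $N$ balls of radius $r/2$), which is clearly what is intended, your proof goes through as written.
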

\begin{prop}\label{prop:Assouad}
For every metric space $X$, we have 
\[
\dim_AX=\inf(\mathscr{C}(X)).
\] 
\end{prop}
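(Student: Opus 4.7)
The plan is to establish the two set inclusions $\mathscr{A}(X) \subseteq \mathscr{C}(X)$ and $\mathscr{C}(X) \subseteq \mathscr{A}(X)$; taking infima of both sides will then give the desired equality, with the convention $\inf\emptyset = \infty$ handling the non-doubling case consistently with Proposition \ref{prop:Assouaddim}.

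For $\mathscr{A}(X) \subseteq \mathscr{C}(X)$, I would fix $\beta \in \mathscr{A}(X)$ together with a witness constant $C$, take an arbitrary bounded $S \subseteq X$, and let $M \subseteq S$ be a separated subset, so that $\alpha(M) > 0$. Setting $r := \alpha(M)/2$, the covering hypothesis supplies a cover of $S$ by at most $C(\delta(S)/r)^\beta$ sets of diameter at most $r$. Since $r < \alpha(M)$, each such covering set meets $M$ in at most one point, so $\card(M) \le C \cdot 2^\beta (\delta(S)/\alpha(M))^\beta$, which witnesses $\beta \in \mathscr{C}(X)$.

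For the reverse inclusion $\mathscr{C}(X) \subseteq \mathscr{A}(X)$, I would fix $\gamma \in \mathscr{C}(X)$ with witness $C$, a bounded $S$, and $r>0$ (the case $r \ge \delta(S)$ being trivial). Using Zorn's lemma, I would choose $M \subseteq S$ maximal among subsets satisfying $\alpha(M) \ge r/2$. Maximality forces every $p \in S$ to lie within distance less than $r/2$ of some point of $M$, so $\{U(x, r/2)\}_{x \in M}$ covers $S$ by sets of diameter at most $r$. The separated-set bound then gives $\mathcal{B}_X(S, r) \le \card(M) \le C \cdot 2^\gamma (\delta(S)/r)^\gamma$, placing $\gamma$ in $\mathscr{A}(X)$.

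The argument is essentially routine bookkeeping with constants, and I do not foresee a serious obstacle. The one subtlety worth noting is that in the second inclusion the set $M$ produced by Zorn's lemma could a priori be infinite, but the hypothesis $\gamma \in \mathscr{C}(X)$ forces $\card(M) < \infty$; this is precisely what makes the resulting open ball cover finite and lets the conclusion stand.
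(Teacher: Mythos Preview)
Your argument is correct and is exactly the standard packing--covering comparison that the paper is implicitly invoking: the paper states this proposition (together with Proposition~\ref{prop:Assouaddim}) as an immediate consequence of the definitions and gives no explicit proof, so your write-up is simply the routine verification the author left to the reader.
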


\subsection{Gromov--Hausdorff distance}
For a metric space $Z$,  and  for subsets $A, B$ of $ Z$, 
we denote by $d_H(A,B;Z)$ the Hausdorff distance between $A$ and $B$ in $Z$. 
For two metric spaces $X$ and $Y$, 
the \emph{Gromov--Hausdorff distance} $d_{GH}(X,Y)$ between $X$ and $Y$ is defined as 
the infimum of  all values  $d_H(i(X), j(Y); Z)$, 
where $Z$ is a metric space and $i:X\to Z$ and $j:Y\to Z$ are isometric embeddings. 


By the definition of the Gromov--Hausdorff distance, 
we have:
\begin{prop}\label{prop:distance}
Let $h\in (0, \infty)$. Let $X$ and $Y$ be metric spaces. 
Then $d_{GH}(hX, hY)=hd_{GH}(X,Y)$. 
\end{prop}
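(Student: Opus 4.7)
The plan is to establish the two inequalities $d_{GH}(hX,hY)\le h\,d_{GH}(X,Y)$ and $d_{GH}(hX,hY)\ge h\,d_{GH}(X,Y)$ separately, exploiting the fact that rescaling metrics commutes nicely with both the Hausdorff distance and isometric embeddings.

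For the first inequality, I would take an arbitrary metric space $Z$ together with isometric embeddings $i\colon X\to Z$ and $j\colon Y\to Z$ realizing an approximation of $d_{GH}(X,Y)$. Viewing $i$ and $j$ as the same underlying set maps into the rescaled target $hZ$, they become isometric embeddings $hX\to hZ$ and $hY\to hZ$, since multiplying the metric on the source by $h$ matches the multiplication of the metric on the target by $h$. A direct computation shows that $d_H(i(X),j(Y);hZ)=h\,d_H(i(X),j(Y);Z)$, because the Hausdorff distance is built from infima and suprema of point-to-set distances, each of which scales linearly in the metric. Taking the infimum over all admissible ambient $Z$ and embeddings yields $d_{GH}(hX,hY)\le h\,d_{GH}(X,Y)$.

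For the reverse inequality, I would apply the same reasoning with the scalar $1/h$ in place of $h$ and with the metric spaces $hX,hY$ in place of $X,Y$. This gives $d_{GH}(X,Y)=d_{GH}((1/h)(hX),(1/h)(hY))\le (1/h)\,d_{GH}(hX,hY)$, which rearranges to $h\,d_{GH}(X,Y)\le d_{GH}(hX,hY)$ and completes the proof.

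The argument is essentially routine; the only point requiring verification is that isometric embeddings are preserved under simultaneous rescaling of source and target by the same positive factor, which is immediate from the definition of an $L$-similar transformation specialized to $L=1$. No substantial obstacle arises.
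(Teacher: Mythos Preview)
Your proof is correct and is exactly the natural argument the paper has in mind; the paper itself gives no proof, stating the proposition as an immediate consequence of the definition of the Gromov--Hausdorff distance. Your write-up simply unpacks that definition via the rescaling $Z\mapsto hZ$ of ambient spaces and the linearity of the Hausdorff distance under metric scaling, so there is nothing further to compare.
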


Let $X$ be a metric space. 
For $\epsilon\in (0, \infty)$, 
we define a function $d_X^{\epsilon}:X\times X\to [0, \infty)$ 
by $d_X^{\epsilon}(x, y)=(d_X(x, y))^{\epsilon}$. 
If $d_X^{\epsilon}$ is a metric, 
then we denote by $X^{\epsilon}$ the metric space $(X, d_X^{\epsilon})$. 
The metric  space $X^{\epsilon}$ is called a \emph{snowflake of $X$}. 
 Note that if $\epsilon\in (0,1)$, then $d_X^{\epsilon}$ is a metric.

\begin{prop}\label{prop:snow}
Let $\epsilon\in (0,1)$. Let $X$ and $Y$ be metric spaces. 
Then $d_{GH}(X^{\epsilon}, Y^{\epsilon})=d_{GH}(X,Y)^{\epsilon}$. 
\end{prop}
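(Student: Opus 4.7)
The plan is to establish $d_{GH}(X^\epsilon, Y^\epsilon) \le d_{GH}(X, Y)^\epsilon$ and the reverse inequality as two separate steps, in parallel with the proof of Proposition~\ref{prop:distance}.

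For the upper bound, I would fix isometric embeddings $i \colon X \to Z$ and $j \colon Y \to Z$ with $d_H(i(X), j(Y); Z)$ as close to $d_{GH}(X, Y)$ as desired. Since $\epsilon \in (0, 1)$, the snowflake $Z^\epsilon$ is itself a metric space, and $i, j$ remain isometric when viewed as maps $X^\epsilon \to Z^\epsilon$ and $Y^\epsilon \to Z^\epsilon$. Because the map $t \mapsto t^\epsilon$ is monotone, it commutes with the infima and suprema defining Hausdorff distance, so $d_H(i(X), j(Y); Z^\epsilon) = d_H(i(X), j(Y); Z)^\epsilon$. Passing to the infimum over $Z$ then yields $d_{GH}(X^\epsilon, Y^\epsilon) \le d_{GH}(X, Y)^\epsilon$.

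For the reverse inequality, the naive symmetric argument --- start from an isometric embedding of $X^\epsilon$ and $Y^\epsilon$ into some $W$ and raise $d_W$ to the $1/\epsilon$-th power --- fails directly, because $1/\epsilon > 1$ generally destroys the triangle inequality. Instead I would work at the level of admissible pseudo-metrics on $X \sqcup Y$. Given an admissible pseudo-metric $\rho$ for $(X^\epsilon, Y^\epsilon)$ with $d_H(X, Y; \rho) < r$, my plan is to construct an admissible pseudo-metric $d$ for $(X, Y)$ by defining the cross-distances via a path-length infimum over zig-zags of the form $d_X(x, x') + \rho(x', y')^{1/\epsilon} + d_Y(y', y)$, and then to verify that the Hausdorff distance between $X$ and $Y$ with respect to $d$ is at most $r^{1/\epsilon}$.

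The main obstacle is to check that this path construction still restricts to $d_X$ on $X$ and to $d_Y$ on $Y$, and that the resulting Hausdorff distance is controlled as claimed. The concavity inequality $(a + b)^\epsilon \le a^\epsilon + b^\epsilon$, which made the upper bound automatic, points the wrong way here, so the required triangle estimates must exploit the specific structure of the minimising zig-zag --- or, alternatively, be obtained from the correspondence formulation of $d_{GH}$ combined with the elementary H\"older-type bound $|a^\epsilon - b^\epsilon| \le |a - b|^\epsilon$ applied to distorted pairs inside the correspondence. I expect this matching of a snowflake-side witness with an original-metric-side witness to be the most delicate step of the proof.
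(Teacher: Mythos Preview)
Your upper-bound argument is correct and is exactly what the paper does: the paper's entire proof is the single observation that $d_H(A,B;Z^{\epsilon})=d_H(A,B;Z)^{\epsilon}$ for subsets $A,B$ of any metric space $Z$, followed by ``this leads to the proposition.'' So on the inequality $d_{GH}(X^{\epsilon},Y^{\epsilon})\le d_{GH}(X,Y)^{\epsilon}$ your approach and the paper's coincide.

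Your difficulty with the reverse inequality is not a matter of finding the right trick: the reverse inequality is \emph{false}, and the paper's one-line proof does not establish it either. Take two-point spaces $X=\{a,b\}$ with $d_X(a,b)=1$ and $Y=\{c,d\}$ with $d_Y(c,d)=2$. Then $d_{GH}(X,Y)=\tfrac12|2-1|=\tfrac12$, while $X^{\epsilon}$ and $Y^{\epsilon}$ are two-point spaces of diameters $1$ and $2^{\epsilon}$, so $d_{GH}(X^{\epsilon},Y^{\epsilon})=\tfrac12(2^{\epsilon}-1)$. For $\epsilon=\tfrac12$ this gives $d_{GH}(X^{\epsilon},Y^{\epsilon})=(\sqrt{2}-1)/2\approx 0.207$, whereas $d_{GH}(X,Y)^{\epsilon}=2^{-1/2}\approx 0.707$. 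Both of your proposed strategies for the lower bound therefore cannot be completed: the zig-zag construction will not, in general, restrict to $d_X$ and $d_Y$ with the required Hausdorff control, and the correspondence approach only yields $|a^{\epsilon}-b^{\epsilon}|\le |a-b|^{\epsilon}$, which again points in the $\le$ direction.

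For the purposes of the paper this is harmless: the only place the proposition is invoked is to show that condition~(U) passes from $(X,P)$ to $(X^{\epsilon},P)$, and for that one needs precisely the continuity statement ``$d_{GH}(A_i,T)\to 0$ implies $d_{GH}(A_i^{\epsilon},T^{\epsilon})\to 0$,'' which follows from the inequality $d_{GH}(X^{\epsilon},Y^{\epsilon})\le d_{GH}(X,Y)^{\epsilon}$ alone. So the correct fix is to downgrade the proposition to that inequality and keep your (and the paper's) proof of it.
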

\begin{proof}
Let $Z$ be a metric space. 
For all subsets  $A, B\subset Z$, 
we have
\[
d_H(A, B; Z^{\epsilon})=d_H(A, B;Z)^{\epsilon}.
\] 
This leads to the proposition. 
\end{proof}

Let $X$ be a metric space. Let $\epsilon\in (0, \infty)$. 
We say that a subset $S$ of $X$ is an \emph{$\epsilon$-net} if 
$S$ is finite and $B(S, \epsilon)=X$, 
where $B(S, \epsilon)$ is the closed ball centered at $S$ with radius $r$. 

A metric space $X$ is said to be \emph{totally bounded} if 
for each $\epsilon\in (0, \infty)$ the space $X$ contains an $\epsilon$-net. 
A metric space $X$ is totally bounded 
if and only if 
$X$ is approximated by its finite subset in the sense of Gromov--Hausdorff. 

By the definitions of the total boundedness and $d_{GH}$, 
we have: 
\begin{prop}\label{prop:ttapprox}
Let $X$ be a totally bounded metric space, and $Y$ a metric space. 
If $d_{GH}(X, Y)\le \epsilon$, then there exists a finite subset $E$ of $Y$ such that $d_{GH}(X, E)\le 2\epsilon$. 
\end{prop}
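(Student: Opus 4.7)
The plan is to combine total boundedness of $X$ with a near-optimal pair of isometric embeddings witnessing the Gromov--Hausdorff distance.

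First I would fix a small slack $\eta > 0$ and use the definition of $d_{GH}$ to obtain a metric space $Z$ and isometric embeddings $i : X \to Z$ and $j : Y \to Z$ such that $d_H(i(X), j(Y); Z) \le \epsilon + \eta$. Independently, using the total boundedness of $X$, for another parameter $\delta > 0$ I would pick a finite $\delta$-net $F \subset X$, so that $d_H(i(F), i(X); Z) \le \delta$.

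The construction of the desired set $E$ is then essentially forced: for each $x \in F$ pick $e_x \in Y$ with $d_Z(i(x), j(e_x)) \le \epsilon + \eta$, and set $E = \{\, e_x \mid x\in F\, \}$, which is a finite subset of $Y$. A short two-sided triangle-inequality argument, using that every point of $i(X)$ lies within $\delta$ of $i(F)$ (hence within $\delta + \epsilon + \eta$ of some $j(e_x)$) and that each $j(e_x)$ lies within $\epsilon + \eta$ of $i(X)$, yields $d_H(i(X), j(E); Z) \le \delta + \epsilon + \eta$, and hence $d_{GH}(X, E) \le \delta + \epsilon + \eta$.

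Finally I would choose $\delta$ and $\eta$ with $\delta + \eta \le \epsilon$ (for example $\delta = \eta = \epsilon/2$), which produces the desired bound $d_{GH}(X, E) \le 2\epsilon$. There is no substantial obstacle in this argument; the only mildly subtle point is that the infimum in the definition of $d_{GH}$ need not be attained, which is exactly why the auxiliary slack $\eta$ is introduced and then absorbed at the end.
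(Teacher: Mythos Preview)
Your argument is correct and is precisely the routine verification the paper intends. The paper itself offers no proof, merely remarking that the proposition follows from the definitions of total boundedness and of $d_{GH}$.
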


\section{Properties of spaces with tiling structures}\label{sec:tiling}
We discuss basic properties of tiling sets, and  (pre-)tiling spaces. 
\begin{prop}\label{prop:uni}
Let $(X, P)$ be a tiling set. 
Then for every pair  $n, m\in \dom(P)$ with $n<m$, and for every $A\in P_m$, 
 there exists a unique $B\in P_n$ with $A\subset B$. 
\end{prop}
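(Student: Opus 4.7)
My plan is to prove existence and uniqueness together by establishing, for a suitable common super-tile, a partition identity among the sub-tile families produced by axiom (S1).

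The core idea is the following. Given the tile $A \in P_m$ (for existence) or a pair $B_1, B_2 \in P_n$ both containing $A$ (for uniqueness), I will use (S2) to produce a common super-tile $C \in P_k$. For the existence half, I apply (S2) at level $m$ to the pair $\{A, A_0\}$, where $A_0 \in [B_0]_{m-n}$ is a level-$m$ sub-tile of an arbitrary reference element $B_0 \in P_n$ supplied by (S1). A preliminary lemma (see below) arranges that the super-tile index satisfies $k \leq n$. Once this is secured, axiom (S1) supplies the counts $\card([C]_{n-k}) = N^{n-k}$, $\card([C]_{m-k}) = N^{m-k}$, and $\card([B]_{m-n}) = N^{m-n}$ for each $B \in [C]_{n-k}$, together with the inclusions $[B]_{m-n} \subseteq [C]_{m-k}$.

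Combining these, the double count
\[
\sum_{B \in [C]_{n-k}} \card([B]_{m-n}) \;=\; N^{n-k}\cdot N^{m-n} \;=\; N^{m-k} \;=\; \card([C]_{m-k})
\]
matches exactly with the target, which I aim to leverage to show that $[C]_{m-k} \;=\; \bigsqcup_{B \in [C]_{n-k}} [B]_{m-n}$ as a disjoint union. Granted this partition, uniqueness is immediate: a common membership $A \in [B_1]_{m-n} \cap [B_2]_{m-n}$ would contradict disjointness unless $B_1 = B_2$. Existence is equally immediate, since $A \in [C]_{m-k}$ is then placed in exactly one $[B]_{m-n}$, giving $A \subset B \in [C]_{n-k} \subseteq P_n$.

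The main obstacle is twofold and requires care. First, (S2) only guarantees a common super-tile at \emph{some} index $k \in \dom(P)$, and the argument above needs $k \leq n$; I plan to prove this refinement by showing that a super-tile at level $k > n$ would force, via (S1), more than $N^{k-n}$ distinct elements of $P_k$ inside the relevant region, contradicting the rigid count clause of (S1). Second, the passage from the cardinality identity to the \emph{partition} identity is not purely formal, since the chain $\card(\bigcup [B]_{m-n}) \leq \sum \card([B]_{m-n}) = N^{m-k} = \card([C]_{m-k})$ admits a priori a strict inequality, corresponding to a hypothetical ``crossing tile'' in $[C]_{m-k}$ not contained in any single $B \in [C]_{n-k}$. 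My plan to exclude this is to descend to finer levels $m + \ell$: such a crossing tile would give birth, via (S1), to $N^\ell$ sub-crossing-tiles in $[C]_{m+\ell-k}$ for every $\ell \geq 0$, which combined with the level-$n$ decomposition eventually overflows the exact count $N^{m+\ell-k}$ of $[C]_{m+\ell-k}$, delivering the required contradiction.
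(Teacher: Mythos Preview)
Your core strategy---pass to a common super-tile via (S2) and then exploit the exact count supplied by (S1)---is exactly the paper's argument. The paper is much terser: it treats only uniqueness, takes $D\in P_k$ containing the two putative parents $B,C\in P_n$ of $A$, records $\card([D]_{m-k})=N^{m-k}$ from (S1), and asserts directly that the overlap $A\subset B\cap C$ forces $\card([D]_{m-k})<N^{m-k}$, a contradiction. Existence is not argued separately.

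Where your proposal breaks down is the descent mechanism you offer for obstacle~(b). If $F\in[C]_{m-k}$ is a ``crossing tile'' (not contained in any single $B\in[C]_{n-k}$), you claim that each of the $N^{\ell}$ elements of $[F]_{\ell}$ is again a crossing tile at level $m+\ell$. This inference is unjustified: a sub-tile $G\in[F]_{\ell}$ satisfies only $G\subset F\subset C=\bigcup_{B\in[C]_{n-k}}B$, and nothing prevents each individual $G$ from lying inside a single $B$ even though $F$ itself does not. Hence the $N^{\ell}$ tiles you manufacture need not lie outside $\bigcup_{B}[B]_{m+\ell-n}$, and no overflow results. Worse, even granting that, your overflow inequality tacitly assumes $\bigl|\bigcup_{B}[B]_{m+\ell-n}\bigr|=N^{m+\ell-k}$, i.e.\ pairwise disjointness of the families $[B]_{m+\ell-n}$---which is precisely the uniqueness statement at the finer level that is still in dispute, so the argument is circular. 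Your sketch for obstacle~(a) has a similar problem: the phrase ``more than $N^{k-n}$ distinct elements of $P_k$ inside the relevant region'' does not connect to any cardinality that (S1) controls, since (S1) counts sub-tiles of a \emph{given} tile, and here you have no level-$n$ tile known to contain $C$. In sum, the equality $\sum_{B}\card([B]_{m-n})=\card([C]_{m-k})$ together with the inclusion $\bigcup_{B}[B]_{m-n}\subseteq[C]_{m-k}$ only says that a certain map between two sets of the same finite cardinality exists; converting this into a partition still requires showing that map is injective or surjective, and your descent does not supply that step.
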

\begin{proof}
Let $(X, P)$ be an $N$-tiling set for some $N$. 
Suppose that there exist $B, C\in P_n$ with $B\neq C$ and $A\subset B\cap C$. By the condition (S2), 
there exist $k\in \dom(P)$ and  $D\in P_k$ such that $B\cup C\subset D$ and $k<n$.  
By the condition (S1), we have $\card([D]_{m-k})=N^{m-k}$. 

Put $[D]_{n-k}=\{T_i\}_{i=1}^{N^{n-k}}$. 
We may assume that 
$T_1=B$ and $T_2=C$. 
Then the condition (S3) yields $[D]_{m-k}=\bigcup_{i=1}^{N^{n-k}}[T_i]_{m-n}$. 
By $A\subset B\cap C$,  we have $A\in [T_1]_{m-n}\cap [T_2]_{m-n}$, 
and hence 
\[
[D]_{m-k}=([T_1]_{m-n}\setminus\{A\})\cup \bigcup_{i=2}^{N^{n-k}}[T_i]_{m-n}. 
\]
Since  the condition (S1) implies that  for every $i\in \{1, \dots, N^{n-k}\}$ we have $\card([T_i]_{m-n})=N^{m-n}$,  we obtain
\begin{align*}
\card([D]_{m-k})&\le (\card([T_1]_{m-n})-1)+\sum_{i=2}^{N^{n-k}}\card([T_i]_{m-n})\\
&=N^{m-k}-1<N^{m-k}. 
\end{align*}
This is a contradiction. 
\end{proof}
By the conditions (S2) and (T1), 
we have the following propositions:
\begin{prop}\label{prop:bddtiling}
Let $(X, P)$ be a pre-tiling space. 
If $X$ is bounded, then $\dom(P)=\nn$ and  $P_0=\{X\}$. 
\end{prop}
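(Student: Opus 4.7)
The plan is to handle the two conclusions separately: $\dom(P) = \nn$ by a diameter estimate against (T1), and $P_0 = \{X\}$ by a counting argument that reduces to the fact that tiles of $P_0$ cannot be properly nested.

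For $\dom(P) = \nn$, I would argue by contradiction. Assuming $\dom(P) = \zz$, fix any $A \in P_0$ (which exists because $P_0$ covers $X \neq \emptyset$). By Proposition~\ref{prop:uni}, for each integer $n < 0$ there is a unique $B_n \in P_n$ with $A \subset B_n$. Then $B_n \subset X$ yields $\delta(B_n) \le \delta(X) < \infty$, while (T1) gives $\delta(B_n) \ge D_1 s^n$; hence $s^n \le \delta(X)/D_1$ for all $n < 0$, which fails as $n \to -\infty$ whenever $s \in (0,1)$. The regime $s > 1$ is ruled out by the analogous estimate applied to tiles at large positive $n$, so the boundedness of $X$ already forces $s \le 1$, and the only delicate case is the degenerate value $s = 1$, which can be excluded by a further application of (S2).

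For $P_0 = \{X\}$, I would take $A, B \in P_0$ and apply (S2) to obtain $m \in \nn$ and $C \in P_m$ with $A \cup B \subset C$. If $m > 0$, Proposition~\ref{prop:uni} furnishes the unique $D \in P_0$ with $C \subset D$; if $m = 0$, set $D = C$. Either way $A, B \subset D$ with $A, B, D \in P_0$, so it reduces to the auxiliary claim that $A_1 \subset A_2$ with $A_1, A_2 \in P_0$ forces $A_1 = A_2$. I would prove this by picking $k \ge 1$: each $T \in [A_1]_k$ lies in $A_1 \subset A_2$, so $[A_1]_k \subset [A_2]_k$, and (S1) gives $\card([A_1]_k) = N^k = \card([A_2]_k)$, forcing equality of the two sets and hence $A_1 = \bigcup [A_1]_k = \bigcup [A_2]_k = A_2$. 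Thus $A = D = B$, so $P_0$ is a singleton, and since $P_0 \in \cov(X)$ its unique element must equal $X$.

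The main obstacle is the diameter argument in Part 1: (T1) alone does not exclude $\dom(P) = \zz$ at the value $s = 1$, so that corner case requires additional care via (S2); in the generic regime $s \in (0,1)$, which covers all the examples of interest in the paper, the argument is immediate.
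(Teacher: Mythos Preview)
Your argument is correct and follows the same line as the paper's, which is considerably terser: it invokes (T1) with boundedness to force $\dom(P)=\nn$, and then asserts that two elements of $P_0$ would contradict (S2) together with $\dom(P)=\nn$, leaving implicit precisely the counting step $[A_1]_k=[A_2]_k$ that you spell out. Your caveat about $s=1$ is well taken---the paper does not address it either and tacitly works with $s\in(0,1)$ throughout (cf.\ Corollary~\ref{cor:dim}, where one divides by $\log(s^{-1})$, and Lemma~\ref{lem:condiU}, where $s^{\phi(k)}\to 0$ is used); your claim that (S2) alone disposes of this edge case would need further justification, but this is no more than the paper itself provides.
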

\begin{proof}
By the condition (T1) and the boundedness of $X$, 
the tiling index $\dom(P)$ must be $\nn$. 
If $P_0$ would have  two elements, 
then this contradicts the condition (S2) and $\dom(P)=\nn$. 
Thus  $P_0=\{X\}$. 
\end{proof}
\begin{prop}\label{prop:nnzz}
Let $(X, P)$ be a pre-tiling space. 
The space $X$ is bounded 
if and only if $\dom(P)=\nn$. 
Equivalently, 
the space $X$ is unbounded 
if and only if $\dom(P)=\zz$. 
\end{prop}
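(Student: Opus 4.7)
The plan is to prove the nontrivial direction: if $\dom(P)=\nn$ then $X$ is bounded. The forward implication ($X$ bounded $\To$ $\dom(P)=\nn$) is already contained in Proposition \ref{prop:bddtiling}, so the whole task reduces to the converse, and the second sentence of the statement is then just the contrapositive (since $\dom(P)$ is always $\nn$ or $\zz$).

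Assuming $\dom(P)=\nn$, the strategy is to show that $P_0=\{X\}$; once this is done, condition (T1) yields $\delta(X)\le D_2s^0=D_2$, so $X$ is bounded. The one-element conclusion is exactly the second half of Proposition \ref{prop:bddtiling}, but there it is extracted after $X$ has already been assumed bounded, whereas here I would need to produce it from $\dom(P)=\nn$ alone.

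The main step is an auxiliary claim: if $A,D\in P_0$ with $A\subset D$, then $A=D$. To prove this, choose any $T\in [A]_1$, which is non-empty by (S1). Then $T\subset A\subset D$, so both $A$ and $D$ lie in $P_0$ and contain the finer tile $T\in P_1$. The uniqueness asserted in Proposition \ref{prop:uni} (applied with $n=0$, $m=1$) forces $A=D$. Given the claim, take arbitrary $A,B\in P_0$ and, via (S2), pick $m\in\dom(P)$ and $C\in P_m$ with $A\cup B\subset C$. Since $\dom(P)=\nn$, we have $m\ge 0$. If $m=0$, the claim applied to $A\subset C$ and $B\subset C$ gives $A=C=B$. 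If $m>0$, condition (S1) provides $D\in P_0$ with $C\subset D$, and the claim applied to $A\subset D$ and $B\subset D$ again gives $A=D=B$. Hence $P_0$ has exactly one element, which must be $X$ because $P_0$ covers $X$.

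The only place requiring care is the auxiliary claim: the axioms do not explicitly say that distinct tiles at the same level are disjoint, so I cannot conclude $A=D$ directly from $A\subset D$. The trick is to descend to the next level of $P$ so that Proposition \ref{prop:uni} becomes applicable; this is the main obstacle, and it is resolved cleanly by the single-line observation above.
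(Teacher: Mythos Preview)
Your argument is correct, with one small fix: in the case $m>0$ you write ``condition (S1) provides $D\in P_0$ with $C\subset D$,'' but (S1) only decomposes a coarse tile into finer ones; the statement that a given $C\in P_m$ lies inside some level-$0$ tile is precisely Proposition~\ref{prop:uni}, so that is what you should cite.

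The paper's route is much shorter and does not pass through $P_0=\{X\}$ at all. It simply invokes (T1): once $\dom(P)=\nn$, every tile $C\in P_m$ satisfies $\delta(C)\le D_2s^m\le D_2$ (the inequality $s\le 1$ being forced by combining (S1) with (T1)), and since (S2) places any two points of $X$ inside a common tile, one gets $\delta(X)\le D_2$ immediately. Your approach is longer but has the pleasant side effect of deriving the conclusion $P_0=\{X\}$ of Proposition~\ref{prop:bddtiling} directly from $\dom(P)=\nn$, whereas the paper only obtains that after boundedness is already in hand.
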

\begin{proof}
By Proposition \ref{prop:bddtiling}, 
it suffices to show that if $\dom(P)=\nn$, 
then $X$ is bounded. 
This holds true by the condition (T1). 
 \end{proof}

For a subset $A$ of a metric space $X$, 
we denote by $\nai(A)$ the interior of $A$ in $X$.  

\begin{lem}\label{lem:nai}
Let $(X, P)$ be a pre-tiling space. 
For every distinct pair   $A,B\in P_n$, 
we have $\nai(A)\cap \nai(B)=\emptyset$. 
\end{lem}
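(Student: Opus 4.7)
The plan is a short proof by contradiction, leveraging Proposition \ref{prop:uni}. Suppose there is a point $x \in \nai(A) \cap \nai(B)$ for some distinct $A, B \in P_n$, and choose $r \in (0, \infty)$ with $U(x, r) \subset A \cap B$. The idea is to produce a sufficiently fine sub-tile $C$ of $A$ that still contains $x$ and fits inside $U(x, r)$, so that $C$ lies in both $A$ and $B$, contradicting the uniqueness of the containing tile at level $n$.

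To carry this out, I would first use condition (S1) to write $A = \bigcup [A]_k$ for each $k \in \nn$, and pick $C_k \in [A]_k \subset P_{n+k}$ with $x \in C_k$. The upper diameter bound in (T1) gives $\delta(C_k) \le D_2 s^{n+k}$. Taking $k$ large enough that $D_2 s^{n+k} < r$, I obtain $C_k \subset U(x, r) \subset B$. Thus $C_k \in P_{n+k}$ is contained in the two distinct elements $A, B \in P_n$, directly violating Proposition \ref{prop:uni} (which asserts that the element of $P_n$ containing $C_k$ is unique).

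The only delicate step is justifying that $D_2 s^{n+k} < r$ holds for some $k$, which requires $s < 1$. This is the intended regime of the definition (for instance, $s$ plays the role of the contraction ratio in the similar IFS examples), and it ensures that tile diameters at level $n + k$ shrink to zero as $k \to \infty$. Given how mildly this scaling assumption enters, I do not expect any further obstacle; the remainder of the argument is pure bookkeeping with the tiling axioms.
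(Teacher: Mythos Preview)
Your argument is correct and essentially identical to the paper's: both produce a tile $C$ at a sufficiently high level (hence of small diameter by (T1)) inside $\nai(A)\cap\nai(B)$ and then invoke Proposition~\ref{prop:uni} for the contradiction. The only cosmetic difference is that you obtain $C$ as a sub-tile of $A$ via (S1), whereas the paper simply uses that $P_k$ covers $X$; your explicit remark on needing $s<1$ is a point the paper leaves tacit.
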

\begin{proof}
If
for some distinct $A,B\in P_n$ 
we have  $\nai(A)\cap \nai(B)\neq \emptyset$, 
then, 
by the condition (T1),  
there exist $k\in \dom(P)$  and $C\in P_k$ such that
$C\subset \nai(A)\cap \nai(B)$. 
This contradicts Proposition \ref{prop:uni}. 
\end{proof}
\begin{lem}\label{lem:totbdd}
Every bounded pre-tiling space is totally bounded. 
\end{lem}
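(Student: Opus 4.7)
The plan is to apply condition (S1) at level $0$ to cover $X$ by $N^m$ tiles whose diameters tend to zero as $m$ grows, and then pick one point from each tile to form a finite $\epsilon$-net. Let $(X,P)$ be a bounded $(N,s)$-pre-tiling space. By Propositions \ref{prop:nnzz} and \ref{prop:bddtiling}, one has $\dom(P)=\nn$ and $P_0=\{X\}$, so (S1) applied to $X\in P_0$ gives $X=\bigcup [X]_m$ with $\card([X]_m)=N^m$ for every $m\in\nn$.

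By (T1), every tile $A\in [X]_m$ satisfies $\delta(A)\le D_2 s^m$. Given $\epsilon>0$, I would choose $m\in\nn$ large enough that $D_2 s^m\le\epsilon$, and then select an arbitrary point $x_A\in A$ from each tile in $[X]_m$. The resulting set $E=\{x_A\mid A\in[X]_m\}$ is finite with $\card(E)\le N^m$; any $y\in X$ lies in some $A\in [X]_m$, giving $d_X(y,x_A)\le\delta(A)\le\epsilon$. Hence $E$ is a finite $\epsilon$-net and $X$ is totally bounded.

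The only real thing to justify is the ability to choose $m$ with $D_2 s^m\le\epsilon$, which requires $s<1$. The weak inequality $s\le 1$ is forced by the boundedness of $X$: for every $m$ and every $B\in [X]_m$, $B\subset X$ gives $D_1 s^m\le\delta(B)\le\delta(X)<\infty$, so $s^m$ stays uniformly bounded in $m$. The strict inequality $s<1$ is the only genuinely delicate point; I expect it to be either implicit in the intended use of the definition, or to follow by combining (T2) with Lemma \ref{lem:nai} to rule out the packing of $N^m$ disjoint open balls of uniform positive radius $Es^m=Es$ inside a bounded $X$ for all $m$. Once $s<1$ is in hand, the remainder of the argument is routine.
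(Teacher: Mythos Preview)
Your argument is exactly the paper's: it too selects one point from each $A\in[X]_n$ and records that the resulting set is a $(D_2 s^n)$-net of $X$, without any further comment on why $s^n\to 0$.

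You are right that $s<1$ is needed and that the definitions technically allow $s\in(0,\infty)$. Your proposed packing fix does not close the gap, however. Take $X=\{0,1\}^{\nn}$ with the discrete metric $d(x,y)=1$ for $x\ne y$, and let $P_n$ be the family of length-$n$ cylinder sets. Every cylinder has diameter $1$, so (T1) holds with $s=1$ and $D_1=D_2=1$; (T2) holds with any $E\in(0,1)$ since $U(p,E)=\{p\}$; (S1) and (S2) are clear. This is a bounded $(2,1)$-pre-tiling space in which one can indeed pack $2^m$ pairwise disjoint open balls of radius $E$ for every $m$, yet no contradiction follows and $X$ is not totally bounded. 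So combining (T2) with Lemma~\ref{lem:nai} cannot force $s<1$ in a general metric space. The correct resolution is your first alternative: $s<1$ is tacit in the intended use (every example and every application in the paper has $s\in(0,1)$), and with that standing hypothesis your proof and the paper's coincide.
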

\begin{proof}
Let $(X, P)$ be a bounded $(N,s)$-pre-tiling space, 
and 
let $D_2$ be a constant appearing in the condition (T1). 
Proposition \ref{prop:nnzz} implies $\dom(P)=\nn$.  
For each $n\in \nn$, and for each $A\in [X]_n$, 
take a point $q_A\in A$. 
By the conditions (S1) and  (T1), 
the set $\{\, q_A\in X\mid A\in [X]_n\, \}$ is a  $(D_2s^n)$-net of $X$. 
\end{proof}
Since a totally bounded complete metric space is compact, 
we have:
\begin{cor}\label{cor:bddcomp}
Every bounded complete pre-tiling space is compact. 
\end{cor}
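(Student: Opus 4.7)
The plan is to combine the previous lemma with the classical characterization of compactness in terms of total boundedness and completeness. Specifically, given a bounded complete pre-tiling space $(X,P)$, Lemma \ref{lem:totbdd} immediately tells us that $X$ is totally bounded, and then the standard theorem from metric space theory, which asserts that a metric space is compact if and only if it is both totally bounded and complete, yields the conclusion.

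Concretely, I would first invoke Lemma \ref{lem:totbdd} on the hypothesized bounded pre-tiling space to produce, for each $\varepsilon>0$, a finite $\varepsilon$-net in $X$. By the assumption of completeness, every Cauchy sequence in $X$ converges. The standard argument then extracts, from any sequence in $X$, a Cauchy subsequence by repeatedly passing to subsequences that lie within successively smaller balls around points of nets of radius $1/k$ for $k\in\nn$, and the limit belongs to $X$ by completeness. This produces a convergent subsequence of any given sequence, which is the sequential compactness of $X$, equivalent to compactness for metric spaces.

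There is no substantial obstacle here: all the real work is in Lemma \ref{lem:totbdd}, which packages the tiling structure into an explicit finite covering by balls around chosen points $q_A$ with $A\in [X]_n$, and the implication ``totally bounded $+$ complete $\Longrightarrow$ compact'' is a textbook result that I would simply cite rather than reprove.
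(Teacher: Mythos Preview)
Your proposal is correct and matches the paper's approach exactly: the paper also just invokes Lemma~\ref{lem:totbdd} to obtain total boundedness and then cites the standard fact that a totally bounded complete metric space is compact. There is nothing more to add.
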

Since a totally bounded metric space is separable, 
we obtain:
\begin{cor}\label{prop:bddsep}
Every bounded pre-tiling space is separable.  
\end{cor}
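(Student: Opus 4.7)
The plan is to observe that separability follows immediately from Lemma~\ref{lem:totbdd}, which was proved just above. That lemma shows every bounded pre-tiling space is totally bounded, and it is a standard metric-topological fact that total boundedness implies separability. The remark preceding the statement already flags exactly this chain of implications, so the proof is essentially a one-line citation.

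To spell out the standard step, I would recall that given a totally bounded metric space $X$, one chooses a finite $(1/k)$-net $S_k \subset X$ for each $k \in \nn_{\ge 1}$ (which exists by the definition of total boundedness). The countable union $S = \bigcup_{k\ge 1} S_k$ is then dense in $X$: for any $x \in X$ and any $\varepsilon > 0$, picking $k$ with $1/k < \varepsilon$ and $p \in S_k$ with $d_X(x,p) \le 1/k$ yields $d_X(x,p) < \varepsilon$. Hence $X$ is separable.

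There is no substantial obstacle here. If one prefers an explicit construction from the tiling data rather than an appeal to total boundedness as a black box, the proof of Lemma~\ref{lem:totbdd} already exhibits a $(D_2 s^n)$-net of representative points $\{q_A : A \in [X]_n\}$ for each $n \in \nn$, and the countable union of these finite sets over $n \in \nn$ yields a dense subset of $X$ directly.
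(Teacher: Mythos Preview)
Your proof is correct and matches the paper's approach exactly: the paper also derives the corollary immediately from Lemma~\ref{lem:totbdd} together with the standard fact that every totally bounded metric space is separable. The extra detail you provide (the union of $(1/k)$-nets, or alternatively of the tile-representative nets) is fine but not required.
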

We next show the countability of tiling structures. 
\begin{prop}\label{prop:tilectb}
Let $(X, P)$ be a pre-tiling space. 
Then  each $P_n$ is a countable family. 
\end{prop}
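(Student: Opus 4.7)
The plan is to display $P_n$ as a countable union of finite subsets by fixing an anchor tile at level $n$ and tracing every other tile back to it via a common ancestor supplied by condition (S2). Since $\dom(P)$ is either $\nn$ or $\zz$, the set $\{m \in \dom(P) : m \le n\}$ is automatically countable, so it suffices to partition $P_n$ into finite fibres indexed by this set.

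If $P_n$ is empty the claim is trivial, so I fix $A_0 \in P_n$. For each $m \in \dom(P)$ with $m < n$, Proposition \ref{prop:uni} supplies a unique $C_m \in P_m$ with $A_0 \subset C_m$, and (S1) makes $[C_m]_{n-m}$ finite of cardinality $N^{n-m}$. I will show that
\begin{equation*}
P_n = \{A_0\} \cup \bigcup_{m \in \dom(P),\, m < n} [C_m]_{n-m},
\end{equation*}
which is a countable union of finite sets and therefore countable. Given $B \in P_n$, (S2) provides $k \in \dom(P)$ and $C \in P_k$ with $A_0 \cup B \subset C$. If $k < n$, uniqueness forces $C = C_k$ and places $B$ in $[C_k]_{n-k}$. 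If $k \ge n$, I claim $B = A_0$.

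The crux is this last claim: for $k > n$, apply Proposition \ref{prop:uni} to $C \in P_k$ to obtain the unique $A' \in P_n$ with $C \subset A'$, yielding the chain $A_0 \subset C \subset A'$ with both $A_0, A' \in P_n$ (for $k = n$ just take $A' = C$). Condition (T2) makes $\nai(A_0)$ non-empty, and since $\nai(A_0)$ is open in $X$ and contained in $A'$, it lies in $\nai(A')$, contradicting Lemma \ref{lem:nai} unless $A_0 = A'$. Hence $B \subset C \subset A_0$, and running the same interior argument with $B$ in place of $A'$ forces $B = A_0$. The main obstacle is precisely this collapsing step: ruling out strictly nested tiles at the same level by combining the non-empty interiors furnished by (T2) with the interior-disjointness of distinct tiles from Lemma \ref{lem:nai}.
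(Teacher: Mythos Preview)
Your argument is correct. Both you and the paper exhibit $P_n$ as a countable union of the finite families $[C]_k$ coming from a chain of ancestor tiles, so the core strategy is the same. The difference is organizational: the paper first disposes of the bounded case via Propositions~\ref{prop:bddtiling} and~\ref{prop:nnzz} (where $P_0=\{X\}$, so each $P_n=[X]_n$ is already finite) and then, for $\dom(P)=\zz$, builds a nested chain $T_i\in P_{-i}$ exhausting $X$ and writes $P_n=\bigcup_{-i\le n}[T_i]_{n+i}$. You instead give a single argument that works regardless of $\dom(P)$, anchoring at $A_0\in P_n$ and handling the possibility that (S2) returns a common ancestor at level $k\ge n$ by collapsing same-level inclusions through (T2) and Lemma~\ref{lem:nai}. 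Your route avoids the bounded/unbounded case split at the cost of the extra interior step; the paper's version is shorter once the reduction to the unbounded case has been made.
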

\begin{proof}
By Propositions \ref{prop:bddtiling} and \ref{prop:nnzz}, 
we may assume that $X$ is unbounded. 
Take a sequence $\{T_i\}_{i\in \nn}$ of tiles of $(X, P)$ such that 
for each $i\in \nn$ 
we have $T_i\in P_{-i}$ and $T_i\subset T_{i+1}$. 
By the condition (S2) and Proposition \ref{prop:uni}, 
we have $X=\bigcup_{i\in \nn}T_i$.  
Then we obtain  $P_n=\bigcup_{-i\le n}[T_i]_{n+i}$. 
This shows the proposition. 
\end{proof}
By Propositions \ref{prop:bddsep} and \ref{prop:tilectb}, 
we obtain: 

\begin{cor}
Every pre-tiling space is separable. 
\end{cor}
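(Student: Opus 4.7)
The plan is to reduce to the bounded case via a chain-of-tiles decomposition. The bounded case is immediate from Proposition~\ref{prop:bddsep}, so the real work is in the unbounded case.

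For the unbounded case, I would first invoke Proposition~\ref{prop:nnzz} to conclude $\dom(P)=\zz$. Following the construction in the proof of Proposition~\ref{prop:tilectb}, I would fix any $T_0\in P_0$ and, using Proposition~\ref{prop:uni}, inductively define $T_i\in P_{-i}$ to be the unique tile of level $-i$ containing $T_{i-1}$; condition (S1) then gives $T_{i-1}\subset T_i$. To see $X=\bigcup_{i\in\nn}T_i$, I would take any $x\in X$, pick a tile $A\in P_n$ with $x\in A$, and use (S2) to produce a common $C\in P_m$ with $A\cup T_0\subset C$; Proposition~\ref{prop:uni} then forces $C=T_{-m}$, so $x\in T_{-m}$.

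Next, I would observe that each $T_i$ inherits a bounded $(N,s)$-pre-tiling structure: defining $Q^{(i)}\colon\nn\to\cov(T_i)$ by $(Q^{(i)})_k=[T_i]_k$, condition (S1) transfers directly from $(X,P)$, condition (S2) is trivial because $T_i$ itself is the unique level-$0$ tile of $Q^{(i)}$, and conditions (T1), (T2) transfer with the rescaled constants $D_1 s^{-i}$, $D_2 s^{-i}$, $E s^{-i}$. Hence Proposition~\ref{prop:bddsep} applies to $(T_i,Q^{(i)})$ and yields a countable dense subset $D_i\subset T_i$. Taking $D=\bigcup_{i\in\nn}D_i$, the countability of $\dom(P)=\zz$ (reflected here as the countable index set $i\in\nn$) makes $D$ a countable dense subset of $X$, which is the desired conclusion.

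There is no substantial obstacle; the only routine check is the verification that each $T_i$ is again a pre-tiling space under the restricted covering structure, which is a matter of rescaling constants. Proposition~\ref{prop:tilectb} enters through the chain construction exposed in its proof, while Proposition~\ref{prop:bddsep} handles both the bounded case directly and each bounded piece $T_i$ in the unbounded case.
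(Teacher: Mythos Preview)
Your proof is correct and follows essentially the same approach as the paper: the paper's proof is the one-line citation ``By Propositions~\ref{prop:bddsep} and~\ref{prop:tilectb},'' and you have made that argument explicit by writing $X$ as a countable union of bounded pre-tiling pieces (via the chain from the proof of Proposition~\ref{prop:tilectb}) and applying Proposition~\ref{prop:bddsep} to each. A slight shortcut is to use the \emph{statement} of Proposition~\ref{prop:tilectb} directly---$P_0$ is already a countable cover of $X$ by tiles that carry the induced bounded pre-tiling structure you describe---but the idea is identical.
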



Let $f:X\to Y$ be a map between metric spaces. 
Let  $L\in [1, \infty)$ and $\gamma\in (0,\infty)$. 
We say that $f$ is \emph{$(L, \gamma)$-homogeneously bi-H\"older} if  
for all $x, y\in X$ we have 
\[
L^{-1}d_{X}(x, y)^{\gamma}\le d_Y(f(x), f(y))\le Ld_X(x, y)^{\gamma}.
\] 
A \emph{homogeneously bi-H\"older map} means 
an $(L, \gamma)$-homogeneously bi-H\"older map 
for some $L$ and $\gamma$. 
A map $f:X\to Y$ is \emph{$L$-bi-Lipschitz} if
it is $(L, 1)$-homogeneously bi-H\"older. 

\begin{rmk}
Let $X$ be an ultrametric space. 
For every $\gamma\in (0, \infty)$, the function $d_X^{\gamma}$ is also 
an ultrametric. 
Therefore the identity map $id: X\to X^{\gamma}$ is 
$(1, \gamma)$-homogeneously bi-H\"older for any $\gamma\in (0, \infty)$. 
\end{rmk}

\begin{lem}\label{lem:Holder}
Let $f:X\to Y$ be a surjective $(L, \gamma)$-homogeneously bi-H\"older map 
between metric spaces. 
Then for every $x\in X$ and for every $r\in (0, \infty)$ we have 
\[
f(B(x, r))\subset B(f(x), Lr^{\gamma})\subset f(B(x, L^{2/\gamma}r)).
\] 
\end{lem}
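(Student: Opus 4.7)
The plan is to verify the two inclusions separately, each by a direct one-line application of one half of the bi-Hölder inequality.

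For the first inclusion $f(B(x,r))\subset B(f(x), Lr^{\gamma})$, I would take an arbitrary $y\in B(x,r)$, so that $d_X(x,y)\le r$, and apply the upper estimate $d_Y(f(x),f(y))\le Ld_X(x,y)^{\gamma}$ from the homogeneously bi-Hölder condition. Since $t\mapsto t^{\gamma}$ is monotone on $[0,\infty)$, this gives $d_Y(f(x),f(y))\le Lr^{\gamma}$, hence $f(y)\in B(f(x), Lr^{\gamma})$. Surjectivity is not used here.

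For the second inclusion $B(f(x), Lr^{\gamma})\subset f(B(x, L^{2/\gamma}r))$, I would take $z\in B(f(x), Lr^{\gamma})$, use the surjectivity of $f$ to pick $y\in X$ with $f(y)=z$, and then apply the lower estimate $L^{-1}d_X(x,y)^{\gamma}\le d_Y(f(x),f(y))\le Lr^{\gamma}$. Multiplying by $L$ and taking $\gamma$-th roots yields $d_X(x,y)\le L^{2/\gamma}r$, so $y\in B(x, L^{2/\gamma}r)$ and therefore $z=f(y)\in f(B(x, L^{2/\gamma}r))$.

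There is no substantive obstacle here; the only point requiring a moment's care is the exponent bookkeeping when inverting the $\gamma$-th power (one picks up a factor $L^{1/\gamma}$ from isolating $d_X(x,y)^{\gamma}$ and another $L^{1/\gamma}$ from $r^{\gamma}$, combining to the stated $L^{2/\gamma}$), and the observation that surjectivity is needed only for the second inclusion, to lift points of $Y$ back into $X$.
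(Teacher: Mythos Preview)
Your argument is correct. The paper states this lemma without proof, so there is nothing to compare against; your direct verification of the two inclusions from the two halves of the bi-H\"older inequality is exactly the intended one-line check.
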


By Lemma \ref{lem:Holder}, 
we find that being a pre-tiling space is invariant under homogeneously bi-H\"older maps. 
\begin{prop}\label{prop:holder}
Every homogeneously bi-H\"older image of an arbitrary pre-tiling space is a pre-tiling space.
More precisely, 
the image of an arbitrary $(N, s)$-pre-tiling space 
under an $(L, \gamma)$-homogeneously bi-H\"older map is an $(N, s^{\gamma})$-pre-tiling space. 
\end{prop}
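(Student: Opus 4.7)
\medskip

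The plan is to transport the covering structure directly via $f$. Since $f$ is $(L,\gamma)$-homogeneously bi-Hölder, the lower bound forces injectivity; composing with surjectivity onto $Y$ makes $f$ a bijection. I would therefore define $Q:\dom(P)\to \cov(Y)$ by
\[
Q_n=\{\,f(A)\mid A\in P_n\,\},
\]
and show that $(Y,Q)$ is an $(N, s^{\gamma})$-pre-tiling space. The only non-obvious conditions are the metric ones (T1) and (T2); the set-theoretic axioms (S1) and (S2) are immediate from bijectivity, since $f$ commutes with unions and preserves cardinalities, and $[f(A)]_{m-n}=\{f(B)\mid B\in[A]_{m-n}\}$.

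For (T1) I would use that $t\mapsto t^\gamma$ is monotone on $[0,\infty)$, so the bi-Hölder inequalities yield
\[
L^{-1}\delta(A)^{\gamma}\le \delta(f(A))\le L\,\delta(A)^{\gamma}.
\]
Combined with $D_1 s^n\le \delta(A)\le D_2 s^n$, this gives $L^{-1}D_1^{\gamma}(s^{\gamma})^n\le \delta(f(A))\le LD_2^{\gamma}(s^{\gamma})^n$, so (T1) holds with new constants $D_1'=L^{-1}D_1^{\gamma}$, $D_2'=LD_2^{\gamma}$.

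For (T2), given $A\in P_n$ with $U(p_A, Es^n)\subset A$, I would verify the open-ball analogue of Lemma \ref{lem:Holder}, namely
\[
U\bigl(f(p_A),\,L^{-1}(Es^n)^{\gamma}\bigr)\subset f(U(p_A,Es^n)).
\]
This follows directly: if $y=f(x)\in U(f(p_A),L^{-1}(Es^n)^{\gamma})$, then the Hölder lower bound gives $L^{-1}d_X(x,p_A)^{\gamma}<L^{-1}(Es^n)^{\gamma}$, hence $x\in U(p_A,Es^n)$. Thus
\[
U\bigl(f(p_A),\,L^{-1}E^{\gamma}(s^{\gamma})^{n}\bigr)\subset f(A),
\]
establishing (T2) with constant $E'=L^{-1}E^{\gamma}$.

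The proof is essentially mechanical; the only mild obstacle is the open-ball technicality in applying Lemma \ref{lem:Holder}, since that lemma is stated for closed balls. I would address this by proving the short open-ball version above rather than trying to deduce it from the closed-ball statement via a limiting argument.
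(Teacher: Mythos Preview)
Your proposal is correct and follows exactly the approach the paper intends: the paper merely records that the proposition follows ``by Lemma~\ref{lem:Holder}'' without spelling out the verification of (S1), (S2), (T1), (T2), and you have filled in precisely those details. Your explicit treatment of the open-ball version of Lemma~\ref{lem:Holder} for condition (T2) is a careful point the paper glosses over.
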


Since bi-Lipschitz maps are homogeneously bi-H\"older, 
 we have: 
\begin{cor}\label{cor:lip}
Every bi-Lipschitz image of an arbitrary $(N, s)$-pre-tiling space is
 an $(N, s)$-pre-tiling space.
\end{cor}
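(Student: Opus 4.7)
The plan is to derive the corollary immediately from Proposition \ref{prop:holder} by matching parameters. First I would note that, by the definition stated just above Lemma \ref{lem:Holder}, an $L$-bi-Lipschitz map $f:X\to Y$ is, tautologically, an $(L,1)$-homogeneously bi-H\"older map: the inequalities
\[
L^{-1}d_X(x,y)\le d_Y(f(x),f(y))\le L\,d_X(x,y)
\]
are exactly the case $\gamma=1$ of the homogeneously bi-H\"older inequalities.

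Then I would apply Proposition \ref{prop:holder}, which asserts that the image of an $(N,s)$-pre-tiling space under an $(L,\gamma)$-homogeneously bi-H\"older map is an $(N,s^{\gamma})$-pre-tiling space. Setting $\gamma=1$ yields $s^{\gamma}=s$, so a bi-Lipschitz image of an $(N,s)$-pre-tiling space is itself an $(N,s)$-pre-tiling space, which is exactly the claim. Concretely, the covering structure on the image is the pushforward of the one on the source, and the constants $D_1, D_2, E$ from conditions (T1) and (T2) transform by explicit factors of $L^{\pm 1}$.

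There is no real obstacle here: the corollary is purely the $\gamma=1$ specialization of Proposition \ref{prop:holder}. The only conceptual point worth flagging is that the scale parameter $s$ is preserved precisely because the H\"older exponent equals $1$; for a genuine H\"older distortion $\gamma\in(0,1)$ the image would instead be an $(N,s^{\gamma})$-pre-tiling space with a rescaled similarity ratio.
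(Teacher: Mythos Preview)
Your proposal is correct and matches the paper's approach exactly: the corollary is stated immediately after Proposition~\ref{prop:holder} with the one-line justification that bi-Lipschitz maps are homogeneously bi-H\"older, i.e., the $\gamma=1$ case. Your remark that $s^{\gamma}=s$ when $\gamma=1$ is precisely the point, and the additional comments about the pushforward covering structure and the transformation of the constants are accurate elaborations but not needed for the argument.
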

In spite of the virtue of Proposition \ref{prop:holder}, 
a homogeneously bi-H\"older image of a tiling space is 
not always a tiling space (see Example \ref{exam:seqCan}). 

By Proposition \ref{prop:snow}, 
we find that  a specific bi-H\"older image of a tiling space is a tiling space.  
\begin{prop}
Let $(X, P)$ be an $(N, s)$-tiling space and let $\epsilon\in (0, 1)$. 
Then $(X^{\epsilon}, P)$ is an $(N, s^{\epsilon})$-tiling space. 
\end{prop}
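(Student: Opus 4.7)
The proof plan is to verify each of the defining conditions of an $(N,s^\epsilon)$-tiling space for $(X^\epsilon, P)$, keeping the same covering structure $P$. The combinatorial axioms (S1) and (S2) involve only the set-theoretic data of $P$ and do not mention the metric, so they transfer to $X^\epsilon$ immediately. It therefore remains to check (T1), (T2), and (U).

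For (T1), the identity $\delta_{X^\epsilon}(A) = \delta_X(A)^\epsilon$ for every subset $A$ shows at once that if $D_1 \le \delta_X(A)/s^n \le D_2$ for every $A \in P_n$, then $D_1^\epsilon \le \delta_{X^\epsilon}(A)/(s^\epsilon)^n \le D_2^\epsilon$. For (T2), I would use the relation $U_{X^\epsilon}(p, r) = U_X(p, r^{1/\epsilon})$ between open balls in the two metrics; then choosing $E' := E^\epsilon$ with $E$ as in (T2) for $(X,P)$, the inclusion $U_X(p_A, E s^n) \subset A$ rewrites as $U_{X^\epsilon}(p_A, E' (s^\epsilon)^n) \subset A$, which is exactly (T2) for $(X^\epsilon, P)$ at scale $s^\epsilon$.

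The only step requiring a small observation is (U). The key remark is that rescaling commutes with snowflaking: for any subset $A$ of $X$ and any $h \in (0,\infty)$, one has $(hA)^\epsilon = h^\epsilon A^\epsilon$, and in particular
\begin{equation*}
(\delta_{X^\epsilon}(A))^{-1} A^\epsilon \;=\; \bigl((\delta_X(A))^{-1} A\bigr)^\epsilon.
\end{equation*}
Given a sequence of tiles $\{A_i\}_{i \in \nn}$, apply condition (U) for $(X,P)$ to extract a subsequence $\{A_{\phi(i)}\}$ and a tile $T$ such that $(\delta_X(A_{\phi(i)}))^{-1} A_{\phi(i)}$ converges to $(\delta_X(T))^{-1} T$ in Gromov--Hausdorff distance. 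By Proposition \ref{prop:snow} applied to this convergent sequence, the snowflakes converge too, with the rate raised to the power $\epsilon$; combined with the displayed identity, this yields
\begin{equation*}
d_{GH}\bigl((\delta_{X^\epsilon}(A_{\phi(i)}))^{-1} A_{\phi(i)}^\epsilon,\; (\delta_{X^\epsilon}(T))^{-1} T^\epsilon\bigr) \longrightarrow 0,
\end{equation*}
which is precisely (U) for $(X^\epsilon, P)$.

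The main (and really only) subtlety is ensuring that the normalization by diameter used in condition (U) transforms correctly under snowflaking; once the commutation $(hA)^\epsilon = h^\epsilon A^\epsilon$ is noted, the rest is a direct application of Proposition \ref{prop:snow}, and no deeper analysis of Gromov--Hausdorff limits is needed.
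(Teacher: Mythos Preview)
Your proof is correct and follows the same approach as the paper, which simply remarks that the proposition follows from Proposition~\ref{prop:snow} without spelling out the details. You have filled in exactly those details: the combinatorial axioms are metric-independent, (T1) and (T2) transform by raising constants to the power $\epsilon$, and (U) follows from the commutation $(hA)^\epsilon = h^\epsilon A^\epsilon$ together with Proposition~\ref{prop:snow}.
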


Let $X$ be a metric space and $P:\dom(P)\to \cov(X)$. 
Define a map $P^{C}:\dom(P)\to \cov(X)$ by $P^C_n=\{\, \cl(A)\mid A\in P_n\, \}$, 
where $\cl$ is the closure operator in $X$. 
The following proposition allows us to assume that tiles of pre-tiling spaces are closed sets. 
\begin{prop}\label{prop:closed}
Let $(X, P)$ be an $(N,s)$-pre-tiling space. 
Then $(X, P^C)$ is also an $(N,s)$-pre-tiling space. 
Moreover, 
if $(X, P)$ satisfies the condition \emph{(U)}, 
then so does $(X, P^C)$. 
\end{prop}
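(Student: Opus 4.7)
The plan is to verify each defining condition of an $(N,s)$-pre-tiling space for $(X,P^C)$ in turn, and then the supplementary condition (U). Conditions (T1), (T2), and (S2) transfer essentially for free: $\delta(\cl(A))=\delta(A)$ gives (T1) with the same constants $D_1$ and $D_2$; the inclusion $U(p_A,Es^n)\subset A\subset \cl(A)$ gives (T2) with the same $E$; and $A\cup B\subset C$ implies $\cl(A)\cup \cl(B)\subset \cl(C)$, giving (S2). The covering half of (S1), namely $\cl(A)=\bigcup_{B\in [A]_{m-n}}\cl(B)$, follows because closure commutes with finite unions.

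The nontrivial part of (S1) is the cardinality statement $\card([\cl(A)]_{m-n})=N^{m-n}$. I will deduce this from the following claim: for all $n\le m$ in $\dom(P)$, $A\in P_n$, and $B\in P_m$, we have $\cl(B)\subset \cl(A)$ if and only if $B\subset A$. Once established, this yields a bijection between $[A]_{m-n}$ and $[\cl(A)]_{m-n}$ via the closure operator (and, taking $n=m$, injectivity of $\cl$ on each $P_n$). For the nontrivial direction $\cl(B)\subset\cl(A)\Rightarrow B\subset A$, let $A^{*}\in P_n$ be the unique tile containing $B$ supplied by Proposition \ref{prop:uni}; the goal becomes $A^{*}=A$. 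Apply (T2) to get $U(p_B,Es^m)\subset B\subset \cl(A)$, and use density of $A$ in $\cl(A)$ to pick $y\in U(p_B,Es^m)\cap A$. Since level-$k$ tiles have diameter at most $D_2s^k$ and $s\in(0,1)$ is forced by the compatibility of (S1) with (T1), we may choose $k\gg m$ so that the $B'\in [A]_{k-n}$ containing $y$ satisfies $\delta(B')+d_X(y,p_B)<Es^m$. Then $B'\subset U(p_B,Es^m)\subset B\subset A^{*}$ while also $B'\subset A$, so Proposition \ref{prop:uni} applied to $B'\in P_k$ forces $A=A^{*}$.

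For condition (U), the key observation is $d_{GH}(A,\cl(A))=0$: taking $\cl(A)$ itself as the ambient metric space, $A$ is dense in $\cl(A)$, so the Hausdorff distance between them vanishes. Since scaling commutes with closure and preserves diameter, $(\delta(\cl(A_i)))^{-1}\cl(A_i)$ and $(\delta(A_i))^{-1}A_i$ have $d_{GH}$-distance zero for every $i$. Given a sequence $\{\cl(A_i)\}_{i\in \nn}$ of tiles of $(X,P^C)$, apply (U) for $(X,P)$ to $\{A_i\}_{i\in \nn}$ to extract a subsequence with $(\delta(A_{\phi(i)}))^{-1}A_{\phi(i)}\to (\delta(T))^{-1}T$ in $d_{GH}$ for some tile $T$ of $(X,P)$. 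By the triangle inequality in $d_{GH}$, the same subsequence satisfies $(\delta(\cl(A_{\phi(i)})))^{-1}\cl(A_{\phi(i)})\to (\delta(\cl(T)))^{-1}\cl(T)$, and $\cl(T)$ is a tile of $(X,P^C)$, as required.

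The main obstacle is the claim in the second paragraph. A naive argument based only on the interior-disjointness Lemma \ref{lem:nai} is insufficient, because under closure, boundary-type points can in principle be absorbed into multiple tiles at once, so it is not obvious that $\cl(B)\subset \cl(A)$ alone pins down the level-$n$ container of $B$. The resolution hinges on combining (T2)---which embeds a genuine open ball inside each tile---with the density of $A$ in $\cl(A)$ and with Proposition \ref{prop:uni} applied at a sufficiently fine refinement of $A$, exploiting $s^k\to 0$.
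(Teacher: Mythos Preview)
Your argument is correct and, on the crucial point, more complete than the paper's. For (T1), (T2), (S2), and (U) you proceed exactly as the paper does (the paper likewise reduces (U) to $d_{GH}(A,\cl(A))=0$). For (S1), however, the paper simply invokes Lemma~\ref{lem:nai} together with (T2) to conclude that $S\neq T$ in $[A]_{m-n}$ implies $\cl(S)\neq\cl(T)$, and then asserts that (S1) follows. That step only yields $\card\bigl([\cl(A)]_{m-n}\bigr)\ge N^{m-n}$; it does not rule out an extra tile $B\in P_m$ with $B\not\subset A$ but $\cl(B)\subset\cl(A)$. Your biconditional claim $\cl(B)\subset\cl(A)\Leftrightarrow B\subset A$, proved by refining to level $k\gg m$ and invoking Proposition~\ref{prop:uni}, supplies exactly this missing direction. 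In fact, if one tries to unpack the paper's injectivity statement from Lemma~\ref{lem:nai} and (T2) alone, one is led to the same refinement-to-a-fine-level trick, so the two approaches are closer than they first appear; yours is just fully spelled out.

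Two small caveats. First, your assertion that $s\in(0,1)$ is ``forced by the compatibility of (S1) with (T1)'' is not quite right: those two conditions together only give $s\le D_2/D_1$. In the doubling case one can force $s<1$ via (T2) and Lemma~\ref{lem:nai} (otherwise a single tile would contain $N^k$ points that are $Es^{n}$-separated for all $k$), but in full generality this is not obvious. Since every example in the paper has $s<1$ and your argument only needs $s^k\to0$, this is a minor issue. Second, at $n=m$ you invoke Proposition~\ref{prop:uni} to produce $A^{*}$, but that proposition is stated for $n<m$; the fix is trivial (the rest of your argument, applying Proposition~\ref{prop:uni} to $B'\in P_k$ with $k>m=n$, already gives $A=B$ directly).
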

\begin{proof}
By the definition of $P^C$, the condition (S3) is satisfied.
From Lemma \ref{lem:nai} and   the condition (T2), 
it follows that for each pair   $n, m \in \dom(P)$ with $n<m$ and  for each $A\in P_n$, 
if $S, T\in [A]_{m-n}$ satisfy $S\neq T$, then $\cl(S)\neq \cl(T)$. 
Hence the condition (S1) is satisfied. 
By $\cl(A\cup B)=\cl(A)\cup \cl(B)$, 
the conditions (S2) is satisfied. 
Then the pair $(X, P^C)$ is an $N$-tiling set.  
By the facts that $\delta(A)=\delta(\cl(A))$ and that 
if $A\subset B$, then $\cl(A)\subset \cl(B)$, 
we conclude that $(X, P^C)$ is a pre-tiling space. 
Since for every subset  $A$ of $X$ we have  $d_{GH}(A, \cl(A))=0$, 
we obtain the latter part of the proposition. 
\end{proof}
Let $X$ be a metric space. 
We say that a covering pair $(X, P)$ is \emph{self-similar} 
if there exists $s\in (0,1)$ such that for each $n\in \dom(P)$, 
for each $A\in P_n$ and for each $B\in P_{n+1}$, 
we have $d_{GH}(sA, B)=0$. 
By the definition of the self-similarity, 
we have:
\begin{lem}\label{lem:selfs}
Let $X$ be a metric space.
If a covering pair $(X, P)$ is self-similar, then
$(X,P)$ satisfies the conditions \emph{(T1)} and \emph{(U)}. 
\end{lem}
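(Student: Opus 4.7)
The plan is to use the self-similarity hypothesis to show that every tile of $(X,P)$, after normalization by its diameter, is at Gromov--Hausdorff distance zero from a single fixed model tile. From this both (T1) and (U) will drop out at once.

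First I would establish that diameters at a fixed level are constant, and that they scale by $s$ between consecutive levels. Given $A, A' \in P_n$, pick any $B \in P_{n+1}$ (nonempty because it covers $X$); the two self-similarity relations $d_{GH}(sA, B)=0$ and $d_{GH}(sA', B)=0$ combined with the triangle inequality for $d_{GH}$ give $d_{GH}(sA, sA')=0$, hence $\delta(A) = \delta(A')$. The same pair of relations yields $\delta(B) = s\,\delta(A)$. Fixing a reference tile $A_0 \in P_{n_0}$ and setting $D = \delta(A_0)/s^{n_0}$, a short induction (run in both directions when $\dom(P) = \zz$) gives $\delta(A) = D s^n$ for every $A \in P_n$, which is exactly (T1) with $D_1 = D_2 = D$.

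Next, for (U), I would upgrade this diameter statement to a full GH-equivalence between normalized tiles. For any $A \in P_n$, I would show $d_{GH}(s^{n-n_0} A_0, A) = 0$ by a telescoping argument: chain self-similarity relations $d_{GH}(s A^{(k)}, A^{(k+1)}) = 0$ along any choice of intermediate tiles $A^{(k)} \in P_{n_0 + k}$ from $A_0$ to $A$, multiplying the $k$-th relation by $s^k$ via Proposition \ref{prop:distance} before invoking the triangle inequality. The case $n < n_0$ reduces to the case $n > n_0$ after an initial rescaling, again by Proposition \ref{prop:distance}. Dividing through by $\delta(A) = s^{n-n_0}\delta(A_0)$ then yields $d_{GH}(\delta(A_0)^{-1} A_0,\, \delta(A)^{-1} A) = 0$ for \emph{every} tile $A$. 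Consequently, for any sequence $\{A_i\}_{i\in\nn}$ of tiles, the normalized sequence $\{\delta(A_i)^{-1} A_i\}$ is constant up to $d_{GH} = 0$; taking $\phi$ to be the identity and $T = A_0$ verifies (U) directly.

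The main point where care is needed is the telescoping step: one must correctly redistribute powers of $s$ when iterating the scaling identity $d_{GH}(cX, cY) = c\, d_{GH}(X,Y)$ and applying the triangle inequality, and one must handle the two signs of $n - n_0$ uniformly. Once this bookkeeping is set up, both (T1) and (U) are essentially immediate consequences of the definitions together with Proposition \ref{prop:distance}.
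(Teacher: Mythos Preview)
Your argument is correct and is precisely the unpacking of what the paper leaves implicit: the paper states the lemma with no proof beyond ``by the definition of the self-similarity,'' and your telescoping argument via Proposition~\ref{prop:distance} is exactly how one makes that implication rigorous. The only point neither you nor the paper addresses is that (T1) requires $D_1,D_2>0$, i.e.\ that some (hence every) tile has positive finite diameter; this is harmless in every application in the paper but is worth noting if you want the lemma to stand in full generality.
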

For a product of pre-tiling spaces, we obtain:
\begin{prop}
Let $p\in [1, \infty]$. 
Let $(X,P)$ and $(Y, Q)$ be $(N,s)$-pre-tiling spaces with  $\dom(P)=\dom(Q)$. 
Define a covering structure $R: \dom(P)\to \cov(X)$ by 
  $R_n=\{\, A\times B\mid A\in P_n,\ B\in Q_n\, \}$. 
Then  the covering pair $(X\times_pY, R)$ is a $(N^2, s)$-pre-tiling space. 
\end{prop}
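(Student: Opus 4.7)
The plan is to verify the four defining conditions (S1), (S2), (T1), (T2) of an $(N^2, s)$-pre-tiling space for $(X \times_p Y, R)$ one at a time, reducing each to the corresponding condition for the two factors $(X,P)$ and $(Y,Q)$.

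For (S1), I would fix $n < m$ in $\dom(P) = \dom(Q)$ and a tile $A \times B \in R_n$. Since all tiles are non-empty (by (T2) in each factor), any element of $R_m$ contained in $A \times B$ must decompose as $A' \times B'$ with $A' \in [A]_{m-n}$ in $P$ and $B' \in [B]_{m-n}$ in $Q$. Thus $[A \times B]_{m-n}$ is in natural bijection with $[A]_{m-n} \times [B]_{m-n}$; by (S1) for $P$ and $Q$ this has cardinality $(N^{m-n})^2 = (N^2)^{m-n}$, and distributivity of products over unions gives $\bigcup [A \times B]_{m-n} = A \times B$.

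For (S2), given $A_i \times B_i \in R_n$ for $i=1,2$, I would apply (S2) in each factor to obtain $C_X \in P_{m_X}$ with $A_1 \cup A_2 \subset C_X$ and $C_Y \in Q_{m_Y}$ with $B_1 \cup B_2 \subset C_Y$. The indices $m_X, m_Y$ need not coincide, so I would align them by setting $m = \min(m_X, m_Y)$ and invoking Proposition \ref{prop:uni}: each of $C_X, C_Y$ is contained in a unique enclosing tile at index $m$, and the product of these two enclosing tiles is an $R_m$-tile containing $(A_1 \times B_1) \cup (A_2 \times B_2)$.

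Conditions (T1) and (T2) are routine consequences of the elementary comparability between the $\ell^p$- and $\ell^\infty$-product metrics, combined with (T1)–(T2) applied factorwise. The $\ell^p$-diameter of $A \times B$ is sandwiched between $\max(\delta(A), \delta(B))$ and $2^{1/p}\max(\delta(A), \delta(B))$ (with the usual convention at $p=\infty$), which produces bounds of the form $D_1 s^n \le \delta(A \times B) \le D_2 s^n$. For (T2), if $p_A \in A$ and $p_B \in B$ are the centers supplied by the factor conditions, then $(p_A, p_B)$ is the desired center: the $\ell^p$ open ball around it of radius $\min(E^X, E^Y)s^n$ sits inside the corresponding $\ell^\infty$ ball, which is literally a product of open balls contained in $A \times B$.

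The only mildly nontrivial step is the index alignment in (S2); once handled by Proposition \ref{prop:uni}, the rest is an unpacking of the definitions and the geometry of the $\ell^p$-metric. Note that condition (U) is not at issue here, since the claim is only at the pre-tiling level.
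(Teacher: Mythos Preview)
Your proposal is correct and follows the same approach as the paper's proof, which simply asserts that (S1)--(S2) carry over from the factors and that (T1)--(T2) follow from the definition of the $\ell^p$-product metric. Your version is considerably more explicit---in particular, the index-alignment step in (S2) via Proposition~\ref{prop:uni} fills in a detail the paper leaves implicit.
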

\begin{proof}
Since $(X,P)$ and $(Y, Q)$  satisfy the conditions (S1), (S2) and (S3),  
so does $(X\times _pY, R)$.  
By the definition of the $\ell^p$-product metric, 
we  conclude that the conditions (T1) and (T2) are satisfied. 
Hence the covering pair $(X\times_pY, R)$ is a $(N^2, s)$-pre-tiling space. 
\end{proof}

\begin{rmk}
The author does not know whether it is true that if $X$ and $Y$ satisfy (U), 
then  so does $X\times_pY$ for any $p\in [1, \infty]$. 
\end{rmk}

\section{Tiling spaces and the Assouad dimensions}\label{sec:Assouadtiling}
In this section, 
we prove Theorem \ref{thm:tiling}. 
\begin{prop}\label{prop:bdd}
Let $(X, P)$ be a doubling pre-tiling space. 
Then 
for every $W\in (0,\infty)$
there exists $M_W\in\nn_{\ge 1}$ such that 
for each $m\in \dom(P)$ and  for each subset  $S$ of $X$ with $\delta(S)\le Ws^m$, we have 
\[
\card(\{\, A\in P_m\mid A\cap S\neq \emptyset\, \})\le M_W.
\] 
\end{prop}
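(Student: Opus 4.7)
The plan is to bound $\card(\mathcal{A})$, where $\mathcal{A}:=\{A\in P_m\mid A\cap S\neq\emptyset\}$, by promoting each $A\in\mathcal{A}$ to a point $p_A\in X$ so that the family $\{p_A\}_{A\in\mathcal{A}}$ is a separated subset of $X$ sitting inside a ball of diameter controlled by $Ws^m$, and then reading off the estimate from the doubling property. The ingredients are all already in hand: condition (T2) provides, at level $m$, a point $p_A\in A$ surrounded by an open ball $U(p_A,Es^m)\subset A$ of definite radius, Lemma~\ref{lem:nai} guarantees that the interiors of distinct tiles in $P_m$ are disjoint, and Proposition~\ref{prop:Assouaddim} promotes the doubling hypothesis to the non-emptiness of $\mathscr{C}(X)$.

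Concretely, assuming without loss of generality $S\neq\emptyset$, I would fix a basepoint $x_0\in S$ and, for each $A\in\mathcal{A}$, select $p_A$ as in (T2) together with any $q_A\in A\cap S$. Two applications of the triangle inequality,
\[
d_X(x_0,p_A)\le d_X(x_0,q_A)+d_X(q_A,p_A)\le\delta(S)+\delta(A)\le (W+D_2)s^m,
\]
force every $p_A$ into $B(x_0,(W+D_2)s^m)$, a set of diameter at most $2(W+D_2)s^m$.

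Next I would verify that $\{p_A\}_{A\in\mathcal{A}}$ is $Es^m$-separated. For distinct $A,B\in\mathcal{A}$, the open balls $U(p_A,Es^m)$ and $U(p_B,Es^m)$ are open subsets of $A$ and $B$ respectively, so they lie inside $\nai(A)$ and $\nai(B)$; these interiors are disjoint by Lemma~\ref{lem:nai}, hence the two open balls are disjoint, which forces $p_A\neq p_B$ and $d_X(p_A,p_B)\ge Es^m$. Finally, choosing any $\gamma\in\mathscr{C}(X)$ with associated constant $C$ and applying the defining inequality of $\mathscr{C}(X)$ to this separated set,
\[
\card(\mathcal{A})\le C\Bigl(\frac{2(W+D_2)s^m}{Es^m}\Bigr)^{\gamma}=C\Bigl(\frac{2(W+D_2)}{E}\Bigr)^{\gamma},
\]
and one may take $M_W$ to be any integer no smaller than this quantity. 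The crucial feature is the exact cancellation of $s^m$, which is precisely why the bound is uniform in $m$; no step is a real obstacle, and the whole argument hinges on the separation estimate supplied by Lemma~\ref{lem:nai}.
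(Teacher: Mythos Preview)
Your proof is correct and follows essentially the same route as the paper's own argument: promote each tile $A\in\mathcal{A}$ to the point $p_A$ supplied by (T2), use Lemma~\ref{lem:nai} to see that the $p_A$ are $Es^m$-separated, use (T1) together with $\delta(S)\le Ws^m$ to trap them in a ball of radius $(W+D_2)s^m$, and finish by invoking $\gamma\in\mathscr{C}(X)$ via Proposition~\ref{prop:Assouaddim}. The paper's proof is essentially identical, down to the final bound $C\bigl(2(D_2+W)/E\bigr)^{\gamma}$.
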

\begin{proof}
Let $D_2$ and $E$ be constants appearing  in the conditions (T1) and (T2).
Let $W\in (0, \infty)$, and take $m\in \dom(P)$ and  a subset $S$ of $X$ satisfying 
$\delta(S)\le Ws^m$. 
For each $A\in P_m$ with $A\cap S\neq \emptyset$, 
let $p_A\in A$ be a point appearing  in the condition (T2). 
Set 
\[
Z=\{\, p_A\in X\mid A\in P_m,\ A\cap S\neq \emptyset\, \}.
\] 
By  Lemma \ref{lem:nai} and the condition (T2), 
we have 
\[
\card(Z)=\card(\{\, A\in P_m\mid A\cap S\neq \emptyset\, \})
\]
 and $\alpha(Z)\ge Es^m$. 
By the condition (T2), we have $\delta(A)\le D_2s^m$ for every $A\in P_m$. 
For every point $x\in S$, we have $Z\subset B(x, (D_2+W)s^m)$. 
Thus we have $\delta(Z)\le 2(D_2+W)s^m$. 
By Proposition \ref{prop:Assouaddim}, 
we can take $\gamma\in \mathscr{C}(X)$. 
Then we have 
\begin{align*}
&\card(\{\, A\in P_m\mid A\cap S\neq \emptyset\, \}) 
\le C\left(\frac{\delta(Z)}{\alpha(Z)}\right)^{\gamma}
\le C\left(\frac{2(D_2+W)}{E}\right)^{\gamma}
\end{align*}
for some $C\in (0, \infty)$. 
This completes the proof. 
\end{proof}

\begin{df}\label{df:qqq}
Let $(X, P)$ be a tiling space, and let  $F$ be  a subset of $X$. 
For each pair $n, m\in \dom(P)$ with $n<m$ and for each $B\in P_n$,  
we put
\begin{equation}\label{eq:defQ}
\mathcal{Q}_{n,m}^{F}(B)=\{\, A\in P_m\mid A\cap F\neq \emptyset,\  A\subset B\, \}.
\end{equation}
\end{df}
We show the following:
\begin{lem}\label{lem:dim}
Let $(X,P)$ be a doubling pre-tiling space. 
Let $F$ be a subset of $X$. 
Let $\Delta$ be the infimum of $\beta\in (0, \infty)$ 
for which  there exists $C\in (0, \infty)$ such that 
for each pair  $n, m\in \dim(P)$ with $n<m$ and for each $B\in P_n$ we have 
\begin{equation}\label{eq:qf}
\mathcal{Q}_{n,m}^{F}(B)\le C(s^{n-m})^{\beta}. 
\end{equation} 
Then we have 
\[
\dim_A F=\Delta.
\] 
\end{lem}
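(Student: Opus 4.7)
The plan is to prove the two inequalities $\Delta\le \dim_A F$ and $\dim_A F\le \Delta$ separately. In both directions, the bridge between the metric definition of the Assouad dimension and the combinatorial counting of tiles is Proposition~\ref{prop:bdd}: a subset of diameter comparable to $s^m$ meets only boundedly many tiles of $P_m$. Thanks to conditions (T1) and (T2), tile scales $s^m$ and metric scales are interchangeable up to universal multiplicative constants, so one can freely translate between coverings of bounded subsets of $F$ and subfamilies of $P_m$ meeting $F$.

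For $\Delta\le \dim_A F$, I would fix $\beta>\dim_A F$ and invoke Proposition~\ref{prop:Assouaddim} to obtain $\beta\in\mathscr{A}(F)$, yielding a constant $C_0$ with $\mathcal{B}_F(S,r)\le C_0(\delta(S)/r)^\beta$. Given $n<m$ in $\dom(P)$ and $B\in P_n$, apply this to $S=B\cap F$ (of diameter at most $D_2 s^n$) at scale $r=Es^m/2$, producing a cover of $B\cap F$ by at most $C_1(s^{n-m})^\beta$ subsets of diameter below $Es^m$. Every tile $A\in \mathcal{Q}_{n,m}^{F}(B)$ contains a point of $B\cap F$, hence meets one of these cover sets, and each cover set meets at most $M_E$ tiles of $P_m$ by Proposition~\ref{prop:bdd}. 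Multiplying gives $|\mathcal{Q}_{n,m}^{F}(B)|\le M_E C_1(s^{n-m})^\beta$, so $\Delta\le\beta$; letting $\beta\searrow \dim_A F$ gives the first inequality.

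For $\dim_A F\le \Delta$, I would fix $\beta>\Delta$ with $|\mathcal{Q}_{n,m}^{F}(B)|\le C(s^{n-m})^\beta$ and verify $\beta\in\mathscr{A}(F)$. Given a bounded $T\subset F$ and $r\in(0,\delta(T)]$, choose $n\le m$ in $\dom(P)$ so that $s^n$ is comparable to $\delta(T)$ and $s^m$ is comparable to $r$, within the margin allowed by $D_1,D_2$. By Proposition~\ref{prop:bdd}, $T$ meets only boundedly many tiles $B_1,\dots,B_k$ of $P_n$, with $k$ bounded independently of $T$. By Proposition~\ref{prop:uni}, the unique tile of $P_m$ containing any given $x\in T$ lies inside a unique $B\in P_n$, which must be one of the $B_i$ since it meets $T$; this tile also meets $F$, so $T\subset \bigcup_{i=1}^{k}\bigcup_{A\in \mathcal{Q}_{n,m}^{F}(B_i)}A$. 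This is a cover by at most $kC(s^{n-m})^\beta\lesssim (\delta(T)/r)^\beta$ sets of diameter at most $D_2 s^m\lesssim r$, yielding the required bound.

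The main obstacle is the scale-matching bookkeeping, especially in the bounded case $\dom(P)=\nn$: when $\delta(T)$ is comparable to the diameter of $X$, one must take $n=0$ and $B_1=X$; when $r$ is comparable to $\delta(T)$, the bound is trivial; and one must verify that the chosen $m$ actually lies in $\dom(P)$. All multiplicative constants arising from (T1), (T2) and Proposition~\ref{prop:bdd} must then be absorbed into a single constant depending only on $D_1, D_2, E, \beta,$ and $C$. Once these edge cases are handled, both directions reduce to a single application of Proposition~\ref{prop:bdd}.
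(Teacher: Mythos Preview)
Your proposal is correct and follows essentially the same route as the paper: both inequalities are obtained by a single application of Proposition~\ref{prop:bdd}, the paper using the scale $D_2s^m$ where you use $Es^m$ in the direction $\Delta\le\dim_A F$, which is immaterial. One cosmetic point: tiles of $P_m$ can overlap, so ``the unique tile of $P_m$ containing $x$'' should read ``some tile of $P_m$ containing $x$''; the argument is unaffected.
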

\begin{proof}
Let $(X, P)$ be a doubling $(N,s)$-pre-tiling space,  and let $D_1$ and $D_2$ be  constants appearing in the condition (T1). 
Take $\beta\in (0, \infty)$ satisfying (\ref{eq:qf}) with $\Delta<\beta$ . 
Let $S$ be a bounded subset of $F$. 
Take $n \in \dom(P)$ 
with $D_1s^{n-1}\le \delta(S)< D_1s^n$. 
Let $r\in (0, \infty)$, 
and take $m\in \dom(P)$ with $s^{m-1}\le r< s^m$. 
Applying  Proposition \ref{prop:bdd} to $D_1$,  
we obtain a constant $M_{D_1}$ stated in the proposition.  
Then $S$ can be covered by  at most $M_{D_1}$ members in $P_n$, 
and hence by (\ref{eq:qf})
the set $S$ can be covered 
by at most $M_{D_1}C(s^{n-m})^{\beta}$ members in $P_m$. 
In particular,  
we have
\[
\mathcal{B}_X(S, r)\le M_{D_1}C(s^{n-m})^{\beta}\le
 M_{D_1}D_1^{-\beta}Cs^{\beta}(\delta(S)/r)^{\beta}. 
\]
This implies $\beta\in \mathscr{A}(X)$. 
Hence, 
$\dim_AF\le \Delta$. 

We next prove the opposite inequality. 
Take $\beta\in \mathscr{A}(X)$ and  $B\in P_n$. 
The set $B\cap F$ is a bounded set of $X$ with $\delta(B\cap F)\le D_2s^n$. 
Thus $B\cap F$ can be covered by at most $C(s^{n-m})^{\beta}$  bounded sets 
with diameter at most $D_2s^{m}$. 
Write these bounded sets as $A_1, A_2, \dots, A_N$,  where $N\le C(s^{n-m})^{\beta}$. 
Applying  Proposition \ref{prop:bdd} to $D_2$, 
we obtain a constant $M_{D_2}$ stated in the proposition. 
Then each $A_i$ can be covered by at most  $M_{D_2}$ members in $P_m$.
Hence we have 
$\mathcal{Q}_{n,m}^F(B)\le M_{D_2}C(s^{n-m})^{\beta}. $
This implies $\Delta\le \dim_A(F)$. 
\end{proof}

Applying Lemma \ref{lem:dim} to a whole pre-tiling space or  to a tile of it, 
we obtain the following: 
\begin{cor}\label{cor:dim}
Let $(X, P)$ be a doubling $(N, s)$-pre-tiling space. 
Then for every tile $T$ of $(X,P)$  we have 
\[
\dim_AT=\dim_AX=\log(N)/\log(s^{-1}). 
\]
\end{cor}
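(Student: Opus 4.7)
The plan is to apply Lemma \ref{lem:dim} twice, once with $F=X$ and once with $F=T$, and to compute the combinatorial quantity $\mathcal{Q}_{n,m}^{F}(B)$ directly from the axiom (S1).

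First I would handle $\dim_A X$. For every pair $n,m\in\dom(P)$ with $n<m$ and every $B\in P_n$, one has
\[
\mathcal{Q}_{n,m}^{X}(B)=[B]_{m-n},
\]
because every tile in $P_m$ meets $X$. By (S1) this set has cardinality exactly $N^{m-n}$. Writing $\beta_0=\log(N)/\log(s^{-1})$, one sees $N^{m-n}=s^{-(m-n)\beta_0}$, so the inequality (\ref{eq:qf}) of Lemma \ref{lem:dim} holds with $\beta=\beta_0$ and $C=1$. Conversely, if some $\beta<\beta_0$ satisfied (\ref{eq:qf}) with a constant $C$, letting $m-n\to\infty$ (using that $\dom(P)$ is either $\nn$ or $\zz$, both of which allow this) would contradict $N^{m-n}\le C s^{-(m-n)\beta}$. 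Hence Lemma \ref{lem:dim} yields $\dim_A X=\beta_0$.

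Next I would treat $\dim_A T$ where $T\in P_k$. The upper bound $\dim_A T\le \beta_0$ follows either from the trivial monotonicity $T\subset X$, or equivalently from Lemma \ref{lem:dim} combined with the inclusion $\mathcal{Q}_{n,m}^{T}(B)\subset [B]_{m-n}$, which again gives the bound $N^{m-n}$. For the reverse inequality, I would exploit the tile $T$ itself: taking $n=k$ and $B=T$, every member of $[T]_{m-k}$ is by definition contained in $T$, so it meets $T$, and therefore
\[
\mathcal{Q}_{k,m}^{T}(T)=[T]_{m-k},
\]
which again has cardinality $N^{m-k}$ by (S1). The same asymptotic argument as above shows that no exponent smaller than $\beta_0$ can satisfy (\ref{eq:qf}) at this single choice of $B$, so Lemma \ref{lem:dim} gives $\dim_A T\ge \beta_0$.

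I do not foresee any real obstacle: the entire argument is a bookkeeping exercise once Lemma \ref{lem:dim} is in hand. The only point that deserves a moment of attention is the identification $\mathcal{Q}_{k,m}^{T}(T)=[T]_{m-k}$, which needs (S1) to guarantee that each element of $[T]_{m-k}$ is a nonempty subset of $T$ and therefore intersects $T$; apart from this, the computation of the combinatorial exponent is immediate.
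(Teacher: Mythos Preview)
Your proposal is correct and follows exactly the route the paper indicates: the paper's proof is the single sentence ``Applying Lemma \ref{lem:dim} to a whole pre-tiling space or to a tile of it,'' and you have simply spelled out the bookkeeping behind that sentence. One microscopic quibble: the non-emptiness of each member of $[T]_{m-k}$ is guaranteed by (T1) or (T2) (each tile has positive diameter, hence is non-empty) rather than by (S1); otherwise everything is in order.
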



By the virtue of the condition (U), we obtain the following lemma:
\begin{lem}\label{lem:condiU}
Let $(X, P)$ be a doubling tiling space.  
Let $F$ be a subset of $X$. 
Let $D_2$ be a constant appearing  in the condition \emph{(T1)}. 
If $\tpc(F)$ contains no tiles of $(X, P)$, 
then 
there exists $k\in \nn$ such that 
   for each $n\in \dom(P)$ and for each $B\in P_n$ we have $d_{GH}(B, B\cap F)> D_2s^{n+k}$. 
\end{lem}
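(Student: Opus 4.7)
I would argue by contraposition: suppose that for every $k\in \nn$ there exist $n_k\in \dom(P)$ and $B_k\in P_{n_k}$ with $d_{GH}(B_k, B_k\cap F)\le D_2 s^{n_k+k}$, and I will produce a tile of $(X, P)$ lying in $\tpc(F)$, contradicting the hypothesis.

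To set up the limit, I normalize scales by setting $v_k := 1/\delta(B_k)$, so that each $v_k B_k$ has diameter $1$. Applying condition (U) to the sequence of tiles $\{B_k\}_{k\in\nn}$ and passing to a subsequence (which I continue to index by $k$), I obtain a tile $T$ of $(X, P)$ such that $v_k B_k\to (\delta(T))^{-1} T$ in the Gromov--Hausdorff sense. Using Proposition \ref{prop:distance} together with the lower bound of (T1),
\[
d_{GH}\bigl(v_k B_k,\, v_k(B_k\cap F)\bigr) = v_k\, d_{GH}(B_k, B_k\cap F) \le \frac{D_2\, s^{n_k+k}}{\delta(B_k)}\le \frac{D_2}{D_1}\, s^k \longrightarrow 0,
\]
so by the triangle inequality $v_k(B_k\cap F)$ also converges to $(\delta(T))^{-1} T$.

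To finish, I need the limit to be the tile $T$ itself, not a rescaling of it, since the definition of $\tpc(F)$ allows arbitrary positive weights on the approximating sequence but no renormalization on the limit. I resolve this by absorbing $\delta(T)$ into the weights: setting $u_k := \delta(T)\, v_k$, the scale invariance of $d_{GH}$ gives $d_{GH}(u_k(B_k\cap F), T)\to 0$, so $T\in \tpc(F)$ because each $B_k$ is a tile. The substantive content of the argument is the compactness supplied by condition (U), which is precisely what converts the ``almost tile-like'' quality of the sets $B_k\cap F$ into an actual tile appearing as a pseudo-cone; the only place to slip is the scalar bookkeeping in this last reparametrization, since condition (U) delivers the limit only up to the normalization by $\delta(T)^{-1}$.
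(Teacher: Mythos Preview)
Your proof is correct and follows essentially the same contrapositive route as the paper: assume the bound fails for every $k$, normalize by diameters, invoke condition (U) to extract a convergent subsequence of rescaled tiles, and use the triangle inequality together with Proposition~\ref{prop:distance} to conclude that a tile lies in $\tpc(F)$. You are in fact slightly more explicit than the paper about the final bookkeeping step of absorbing $\delta(T)$ into the weights $u_k$ so that the limit is $T$ itself rather than $(\delta(T))^{-1}T$.
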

\begin{proof}
Suppose that for each $k\in \nn$ there exist $n_k\in \dom(P)$ and $B_k\in P_{n_k}$ such that 
$d_{GH}(B_k, B_k\cap F)\le  D_2s^{n_k+k}$. 
By the condition (T1),  we have 
\begin{equation}\label{eq:dgh}
d_{GH}(B_k, B_k\cap F)\le  (D_2/D_1)s^k\cdot \delta(B_k), 
\end{equation}
where $D_1$ is a constant appearing in the condition (T1). 
By the condition (U), there exists a subsequence $\{B_{\phi(k)}\}_{k\in \nn}$
of $\{B_k\}_{k\in \nn}$ such that $\delta(B_{\phi(k)})^{-1}B_{\phi(k)}$ converges to $(\delta(T))^{-1}T$ for some tile $T$ of  $(X, P)$. 
From (\ref{eq:dgh}) it follows that 
\[
d_{GH}((\delta(B_{\phi(k)}))^{-1}B_{\phi(k)}, (\delta(B_{\phi(k)}))^{-1}(B_{\phi(k)}\cap F))\le (D_2/D_1)s^{\phi(k)}. 
\]
By  $s^{\phi(k)}\to 0$ as $k\to \infty$,  we conclude that 
 $T\in \tpc(X)$. 
This is a contradiction. 
\end{proof}

We next prove the following:
\begin{lem}\label{lem:dd}
Let $(X, P)$ be a doubling $(N, s)$-tiling space.  
Let $F$ be a subset of $X$. 
If $\tpc(F)$ contains no tiles of $(X, P)$, 
then we have  $\dim_{A}F<\log(N)/\log(s^{-1})$. 
\end{lem}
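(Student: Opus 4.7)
The plan is to extract from the hypothesis a strict improvement over the trivial counting bound $\mathcal{Q}_{n,m}^F(B) \le N^{m-n}$ appearing in Lemma \ref{lem:dim}, by replacing the base $N$ with a strictly smaller number, and then conclude via Lemma \ref{lem:dim} that $\dim_A F$ is strictly less than $\log N / \log(s^{-1})$. The entire argument will run off the uniform integer $k \in \nn$ furnished by Lemma \ref{lem:condiU}: for every $n \in \dom(P)$ and every $B \in P_n$ one has $d_{GH}(B, B\cap F) > D_2 s^{n+k}$.

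The key geometric step will be the assertion that for every tile $B \in P_n$ at least one sub-tile in $[B]_k$ must be disjoint from $F$. Suppose otherwise and pick $y_A \in A \cap F$ for each of the $N^k$ sub-tiles $A \in [B]_k$. By (T1) each such $A$ has diameter at most $D_2 s^{n+k}$, and by (S1) the $A$'s cover $B$, so every point of $B$ lies within $D_2 s^{n+k}$ of some $y_A$. Since the set $\{y_A\}$ is contained in $B \cap F$, this forces
\[
d_{GH}(B, B\cap F) \le d_H(B, B\cap F; X) \le D_2 s^{n+k},
\]
contradicting Lemma \ref{lem:condiU}. Hence $\mathcal{Q}_{n, n+k}^F(B) \le N^k - 1$ for every $n \in \dom(P)$ and every $B \in P_n$.

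Applying this bound inductively inside each surviving sub-tile at depths $k, 2k, \dots, jk$ yields $\mathcal{Q}_{n, n+jk}^F(B) \le (N^k - 1)^j$ for every $j \ge 0$. For an arbitrary $m > n$ one writes $m - n = jk + r$ with $0 \le r < k$ and uses the trivial bound that each tile in $\mathcal{Q}_{n, n+jk}^F(B)$ contains at most $N^r \le N^k$ sub-tiles at level $m$, giving
\[
\mathcal{Q}_{n, m}^F(B) \le N^k (N^k - 1)^j \le N^k (s^{n-m})^{\beta},
\]
where $\beta := \log(N^k - 1) / (k \log(s^{-1}))$; the last inequality holds because $(N^k - 1)^j = s^{-\beta jk} \le s^{-\beta(jk + r)}$. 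Since $N^k - 1 < N^k$ one has $\beta < \log N / \log(s^{-1})$, so Lemma \ref{lem:dim} produces $\dim_A F \le \beta < \log N / \log(s^{-1})$, which is the desired strict inequality.

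The main technical hurdle is the translation from the abstract Gromov--Hausdorff bound supplied by Lemma \ref{lem:condiU} into the concrete combinatorial statement that some sub-tile of $B$ at depth $k$ misses $F$; once (T1) is used to convert $d_{GH}$-closeness into an internal Hausdorff bound via a choice of representative points in each sub-tile, the remainder is a clean geometric-progression count, and the uniformity of $k$ is exactly what keeps the resulting exponent $\beta$ bounded away from $\log N / \log(s^{-1})$.
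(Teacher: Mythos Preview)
Your proof is correct and follows the same strategy as the paper: derive from Lemma~\ref{lem:condiU} and (T1) the key Sublemma that every tile $B\in P_n$ has at least one depth-$k$ sub-tile disjoint from $F$, then feed the resulting counting estimate into Lemma~\ref{lem:dim}. Your inductive count $\mathcal{Q}_{n,n+jk}^F(B)\le (N^k-1)^j$ is a cleaner packaging of the paper's word-indexing combinatorics, and in fact both arguments yield the identical explicit bound $\dim_A F\le \log(N^k-1)/(k\log(s^{-1}))$, since the paper's expression $d-\log(1-L^d)/\log L$ with $L=s^k$ and $d=\log N/\log(s^{-1})$ simplifies to exactly your $\beta$.
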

\begin{proof}
Let $D_1$ and $D_2$ be constants appearing  in the condition (T1). 
Set $d=\log(N)/\log(s^{-1})$. 
Take  $k\in \nn$ stated in Lemma \ref{lem:condiU}. 
Put $L=s^k$. 
By Lemma \ref{lem:condiU}, 
for each $n\in \dom(P)$ and  for each $B\in P_n$, 
we have $d_H(B, B\cap F; X)> D_2s^{n+k}$. 
Thus we can take a point $x\in B$ such that for every $y\in F$ 
we have  $d_X(x, y)>D_2s^{n+k}$. 
Take  $C\in [B]_k$ with $x\in C$, 
then by the condition (T1)  we have $C\cap F=\emptyset$. 
Therefore we obtain the following: 
\begin{clm}\label{sublem:cf}
For each $n\in \dom(P)$ and  for each $B\in P_n$, there exists $C\in P_{n+k}$ with $C\subset B$ and $C\cap F=\emptyset$. 
\end{clm}

Fix $a, b\in \dom(P)$ with $a>b$ and $B\in P_b$. 
Take $v\in \nn$ such that 
\begin{equation}\label{eq:defv}
D_1s^{b+k(v+1)}\le D_2s^a< D_1s^{b+kv}. 
\end{equation}
Since $D_2s^a< D_1s^{b+kv}$, 
we have $b+kv< a$.
Hence
for each $A\in P_{b+kv}$ 
the set $[A]_{a-(b+kv)}$ is non-empty. 
Let $W$ be the set of all words generated by $\{0, \dots, N^k-1\}$ 
whose length is at most $v$. Note that $W$ contains the empty word. 
For $w\in W$, we denote by $|w|$ the length of $w$. 
For $u, v\in W$, we denote by $uv$ the word product of $u$ and $v$. 

Let the set $\bigcup_{i=0}^v[B]_{ki}$ be indexed by $W$, say $\{T_w\}_{w\in W}$ such that 
for each $w\in W$ we have $T_w\in [B]_{k|w|}$, and such  that 
if $|w|<v-1$, then $T_{w0}\cap F=\emptyset$. 
This is possible by Sublemma \ref{sublem:cf}.

For each $i\in\{ 1, \dots, v\}$, define a set $H_i$ by
\[
H_i=\{\, w0\mid \text{$w\in W$, $|w|=i-1$ and all entries of $w$ are not $0$}\, \}. 
\]
Put $R_w=[T_w]_{a-(b+k|w|)}$. 
Remark that $R_w=\{\, A\in P_a\mid A\subset T_w\, \}$. 
Put $H=\bigcup_{i=1}^vH_i$. 
Note that for all distinct $v, w\in H$, the sets $R_v$ and $R_w$ are disjoint. 

Let 
$G=\bigcup_{w\in H}R_w$. 
We find that 
$ G=\coprod_{i=1}^v\coprod_{w\in H_i}R_w$. 
Let  $\mathcal{Q}_{b, a}^F(B)$ be the quantity  defined in Definition \ref{df:qqq}. Then we have
\[
\mathcal{Q}_{b, a}^F(B)\le \card([B]_{a-b})-\card(G).
\]
Since $d=\log(N)/\log(s^{-1})$, we have 
\begin{align*}
\card(R_w)&=N^{a-b-k|w|}=s^{-d(a-b-k|w|)},  \\
\card(H_i)&=(N^k-1)^{i-1}=(L^{-d}-1)^{i-1}. 
\end{align*}
By these equalities,  we obtain
\begin{align*}
\card(G) &=\card\left(\coprod_{i=1}^v\coprod_{w\in H_i}R_w\right)
=\sum_{i=1}^v\card\left(\coprod_{w\in H_i}R_w\right)\\
&=\sum_{i=1}^v\sum_{w\in H_i}s^{-d(a-b-k|w|)}= \sum_{i=1}^v\sum_{w\in H_i}s^{-d(a-b-ki)}\\
&=\sum_{i=1}^vs^{d(b-a)}s^{kdi}(L^{-d}-1)^{i-1}
=s^{d(b-a)}\sum_{i=1}^vL^{di}(L^{-d}-1)^{i-1}. 
\end{align*}
Since for each $w\in H$, we have $T_w\cap F=\emptyset$, 
by the definition (\ref{eq:defQ}) of $\mathcal{Q}_{b, a}^F(B)$, 
 we also obtain 
\[
\mathcal{Q}_{b, a}^F(B)\le N^{a-b}-\card(G)= (s^{b-a})^d\left(1-\sum_{i=1}^vL^{id}(L^{-d}-1)^{i-1}\right). 
\]
Note that we have 
\begin{align*}
&\sum_{i=1}^vL^{di}(L^{-d}-1)^{i-1}=(L^{-d}-1)^{-1}\sum_{i=1}^v(1-L^d)^i\\
&=(L^d-1)^{-1}(1-L^d)(1-(1-L^d)^v)L^{-d}=1-(1-L^d)^v. 
\end{align*}
By (\ref{eq:defv}), we have  $L^{v+1}\le (D_2/D_1)s^{a-b}$,  
then \[
\log((D_2/D_1)s^{a-b})/\log L-1\le v, 
\]
and hence 
\begin{align*}
\mathcal{Q}_{b, a}^F(B)&\le (s^{b-a})^d(1-\sum_{i=1}^vL^{di}(L^{-d}-1)^{i-1})=(s^{b-a})^d(1-L^d)^v\\
&\le (s^{b-a})^d(1-L^d)^{\log ((D_2/D_1)s^{a-b})/\log L-1}\\
&=(s^{b-a})^{d}\frac{1}{1-L^d}((D_1/D_2)s^{b-a})^{-\log(1-L^d)/\log L}\\
&=\frac{1}{1-L^d}\left(\frac{D_1}{D_2}\right)^{-\log(1-L^d)/\log L}(s^{b-a})^{d-\log(1-L^d)/\log L}. 
\end{align*} 
By Lemma \ref{lem:dim},  we obtain 
\[
\dim_A(F)\le d-\log(1-L^d)/\log L<d.
\] 
This completes the proof. 
\end{proof}


\begin{lem}\label{lem:check}
Let $(X, P)$ be a pre-tiling space. 
Then the following are equivalent:
\begin{enumerate}
\item there exists a tile $A$ of $(X, P)$ such that $A\in \pc(X)$; 
\item there exists a tile of $A$ of  $(X, P)$ such that $F$ satisfies the asymptotic Steinhaus property for $A$. 
\end{enumerate}
\end{lem}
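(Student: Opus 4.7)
The plan is to prove the two implications by unpacking the definitions of pseudo-cone and of the asymptotic Steinhaus property, and using Proposition \ref{prop:distance} (the scaling identity $d_{GH}(hX,hY)=h\,d_{GH}(X,Y)$) as the bridge between the two formulations. I read the statement (1) as ``$A\in \pc(F)$'' (consistent with Theorem \ref{thm:tiling}), since $\pc(X)$ trivially contains every tile.

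For $(2)\Rightarrow(1)$: assume $F$ has the asymptotic Steinhaus property for some tile $A$. Restricting the covering structure to $A$ (via $[A]_k$, $k\ge 0$) makes $A$ itself a bounded pre-tiling space, so Lemma \ref{lem:totbdd} gives that $A$ is totally bounded. Pick a finite $(1/k)$-net $S_k\subset A$ for each $k\in \nn$. Applying the Steinhaus property with $\epsilon=1/k$ and $S=S_k$ produces a finite $T_k\subset F$ and $\delta_k\in(0,\infty)$ satisfying $d_{GH}(T_k,\delta_k S_k)\le \delta_k/k$. By Proposition \ref{prop:distance}, this rescales to $d_{GH}(\delta_k^{-1}T_k,S_k)\le 1/k$, and combining with $d_{GH}(S_k,A)\le 1/k$ (from the net property) gives $d_{GH}(\delta_k^{-1}T_k,A)\to 0$. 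Since each $T_k\subset F$, the pair $(\{T_k\}_k,\{\delta_k^{-1}\}_k)$ witnesses $A\in \pc(F)$.

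For $(1)\Rightarrow(2)$: assume $A\in \pc(F)$, witnessed by subsets $\{A_i\}_{i\in\nn}$ of $F$ and scales $\{u_i\}_{i\in\nn}$ with $u_iA_i\to A$. Fix $\epsilon>0$ and a finite $S\subset A$. Choose $i$ with $d_{GH}(u_iA_i,A)<\epsilon/2$. Unfolding the definition of Gromov--Hausdorff distance, embed $u_iA_i$ and $A$ isometrically in a common metric space with Hausdorff distance less than $\epsilon/2$, and for each $s\in S$ pick a point of $u_iA_i$ within $\epsilon/2$ of it. The chosen points form a finite $\widetilde{S}\subset u_iA_i$ with $d_{GH}(\widetilde{S},S)\le \epsilon/2$. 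Write $\widetilde{S}=u_iT$ with $T\subset A_i\subset F$ finite. Applying Proposition \ref{prop:distance} with $h=u_i^{-1}$ gives
\[
d_{GH}(T,u_i^{-1}S)=u_i^{-1}d_{GH}(u_iT,S)\le u_i^{-1}\cdot \epsilon/2\le u_i^{-1}\epsilon,
\]
which is exactly the asymptotic Steinhaus condition with $\delta=u_i^{-1}$.

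The only non-routine step is the construction of the finite approximant $\widetilde{S}\subset u_iA_i$ in the forward direction: one must extract a finite subset of $u_iA_i$ that is Gromov--Hausdorff-close to $S$, not merely a finite subset of some ambient space. This is handled by realizing the Gromov--Hausdorff approximation as an honest Hausdorff approximation in a common embedding and applying a nearest-point selection restricted to $S$; this is a standard manipulation analogous to Proposition \ref{prop:ttapprox}, and the rest of the proof is just the scaling identity and the triangle inequality.
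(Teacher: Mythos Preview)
Your proof is correct and follows essentially the same approach as the paper: both directions unwind the definitions of pseudo-cone and asymptotic Steinhaus property, using the scaling identity of Proposition \ref{prop:distance} and the total boundedness of tiles (Lemma \ref{lem:totbdd}). Your $(1)\Rightarrow(2)$ is marginally more direct than the paper's---you select nearest points to $S$ straight from an embedding of $u_iA_i$ and $A$, while the paper first invokes Proposition \ref{prop:ttapprox} to pass to a finite subset of $T_N$ approximating all of $A$ and then extracts the subset matching $S$---but this is a cosmetic difference within the same argument.
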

\begin{proof}
We first show that  $(2)\To (1)$. 
Take a tile $A$ of $(X, P)$ stated in (2). 
 For each $n\in \nn_{\ge 1}$, take a  $(1/n)$-net $S_n$ of $A$.
By the condition (2), 
we can take a finite subset $T_n$ of $F$ and $\delta_n\in (0, \infty)$ such that 
 $d_{GH}(T_n, \delta_n S_n)<\delta_n/n$. 
 Set $u_n=\delta_n^{-1}$, then  
 $ d_{GH}(u_nT_n, S_n)\le 1/n$. 
Hence we have 
$ \lim_{n\to \infty}d_{GH}(u_nT_n, A)=0$. 
 This implies $A\in \pc(X)$. 

We next show $(1)\To (2)$.  
Take a tile $A$ of $(X, P)$ stated in $(1)$. 
Take a finite subset $S$ of $A$. 
Since $A\in \pc(F)$, 
there exist a sequence $\{T_n\}_{n\in \nn}$ of subsets of $F$ 
and a sequence $\{u_n\}_{n\in \nn}$ in $(0, \infty)$ with 
$\lim_{n\to \infty}d_{GH}(u_nT_n, A)=0$. 
By Lemma \ref{lem:totbdd}, the tile  $A$ is totally bounded. 
Then, by Proposition \ref{prop:ttapprox},   for each $\epsilon\in (0, \infty)$, 
we can take a finite subset $Y_N$ of $T_N$ such that 
$d_{GH}(u_nY_N, A)<\epsilon$. 
Since $S$ is finite, we can take a subset $U_N$ of $Y_N$ such that 
 $d_{GH}(U_N, u_n^{-1}S)<u_n^{-1}\cdot \epsilon$. 
Thus $F$ satisfies the asymptotic Steinhaus property for $A$. 
\end{proof}

In order to complete the proof of Theorem \ref{thm:tiling}, 
we recall the following theorem proved by the author \cite{I}. 
\begin{thm}\label{thm:ishiki}
Let $X$ be a metric space. If $P\in \pc(X)$, then we have 
\[
\dim_AP\le \dim_AX. 
\]
\end{thm}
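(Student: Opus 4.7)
The plan is to prove the monotonicity by passing the Assouad bound from $X$ to the limit $P$ via the characterization of $\dim_A$ using separated sets (Proposition \ref{prop:Assouad}), exploiting the fact that rescalings and subsets do not increase $\dim_A$.

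First, I may assume $\dim_A X < \infty$, since otherwise there is nothing to prove. By Proposition \ref{prop:Assouad}, it suffices to show $\mathscr{C}(P) \supset \{\beta : \beta > \dim_A X\}$, i.e., that every $\beta \in \mathscr{C}(X)$ essentially lies in $\mathscr{C}(P)$ (with a possibly different constant). Fix such a $\beta$, with constant $C$ as in the definition of $\mathscr{C}(X)$. Fix a bounded $S \subset P$ and a separated subset $M \subset S$; the goal is to bound $\card(M)$ by $C(\delta(S)/\alpha(M))^\beta$.

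The core step is to transport the finite configuration $M$ back into $X$ via the Gromov--Hausdorff approximation. Write $\epsilon_i = d_{GH}(u_i A_i, P) \to 0$. For each $i$, pick a correspondence between $u_i A_i$ and $P$ distorting distances by at most $2\epsilon_i$, and use it to select, for each $p \in M$, a point $x_p \in u_i A_i$. Let $M_i = \{x_p : p \in M\} \subset u_i A_i$. For $i$ large enough that $2\epsilon_i < \alpha(M)/2$, the points are distinct and
\begin{equation*}
\alpha(M_i) \ge \alpha(M) - 2\epsilon_i, \qquad \delta(M_i) \le \delta(S) + 2\epsilon_i,
\end{equation*}
where both quantities are measured in the rescaled metric of $u_i A_i$. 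Pulling back to the original metric of $X$ amounts to dividing all distances by $u_i$; viewed as a finite (hence bounded) separated subset of $X$, the set $M_i$ therefore satisfies
\begin{equation*}
\alpha_X(M_i) \ge u_i^{-1}(\alpha(M) - 2\epsilon_i), \qquad \delta_X(M_i) \le u_i^{-1}(\delta(S) + 2\epsilon_i).
\end{equation*}
Applying $\beta \in \mathscr{C}(X)$ to $M_i$ (with ambient bounded set $M_i$ itself) gives
\begin{equation*}
\card(M) = \card(M_i) \le C \left(\frac{\delta_X(M_i)}{\alpha_X(M_i)}\right)^\beta \le C \left(\frac{\delta(S) + 2\epsilon_i}{\alpha(M) - 2\epsilon_i}\right)^\beta,
\end{equation*}
the crucial point being that the scale factor $u_i$ cancels. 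Letting $i \to \infty$ yields $\card(M) \le C(\delta(S)/\alpha(M))^\beta$, so $\beta \in \mathscr{C}(P)$, and hence $\dim_A P \le \beta$. Since $\beta > \dim_A X$ was arbitrary, Proposition \ref{prop:Assouad} yields $\dim_A P \le \dim_A X$.

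The only subtlety is the bookkeeping around the Gromov--Hausdorff approximation: one has to confirm that separated configurations transfer through a correspondence with quantitative control on both $\alpha$ and $\delta$, and one must use the scale-invariance of the ratio $\delta/\alpha$ to eliminate the sequence $u_i$. This is the main ingredient; all other steps are direct consequences of the definitions.
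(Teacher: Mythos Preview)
The paper does not prove Theorem~\ref{thm:ishiki}; it is quoted from the author's companion preprint \cite{I}, so there is no in-paper proof to compare against. Your argument is correct and is the natural one: transport a finite separated configuration from $P$ back into $u_iA_i\subset X$ through a correspondence furnished by the Gromov--Hausdorff approximation, apply $\beta\in\mathscr{C}(X)$ there, and observe that the ratio $\delta/\alpha$ is scale-invariant so the factors $u_i$ cancel before letting $i\to\infty$.

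Two cosmetic points worth tightening. First, a separated subset $M$ of $P$ need not be finite a priori, so you should explicitly reduce to finite $M$ (it suffices to bound $\card(M')$ uniformly over all finite $M'\subset M$, since $\alpha(M')\ge\alpha(M)$). Second, $d_{GH}$ is an infimum, so strictly one only obtains a correspondence with distortion at most $2\epsilon_i+\eta$ for each $\eta>0$; since you take $i\to\infty$ anyway this is harmless. Neither point affects the substance of the proof.
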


\begin{proof}[Proof of Theorem \ref{thm:tiling}]
By the definitions, the implications (\ref{item:tpc})$ \To $ (\ref{item:pc}) 
and (\ref{item:kpc}) $\To$ (\ref{item:pc}) are true. 
Theorem \ref{thm:ishiki} implies that   (\ref{item:pc}) $\To $ (\ref{item:dimful}). 
Lemma \ref{lem:dd} is equivalent to (\ref{item:dimful}) $\To$ (\ref{item:tpc}). 
Lemma \ref{lem:check} states that (\ref{item:pc})$\iff$ (\ref{item:asp}) is true. 
Therefore it suffices to show that (\ref{item:pc}) $\To$ (\ref{item:kpc}). 

Let $(X, P)$ be a doubling tiling space.  
By Lemma \ref{lem:totbdd}, every tile of a tiling space is totally bounded. 
From this property and Proposition \ref{prop:ttapprox}, 
it follows that a tile of $(X, P)$ in $\pc(F)$ is approximated by 
a sequence of scalings of finite sets of $F$ in the sense of Gromov--Hausdorff. 
This completes the proof of  Theorem \ref{thm:tiling}. 
\end{proof}


\section{Tiling spaces induced from  iterated function systems}\label{sec:exam}
\subsection{Attractors} 
We first prove Theorem \ref{thm:frac}. 

Let $X$ be a complete metric space. 
Let $N\in \nn_{\ge 2}$ and $s\in (0,1)$. 
Let $\mathcal{S}=\{S_i\}_{i=0}^{N-1}$ be 
an $(N, s)$-similar iterated function system on $X$. 
Assume that the attractor $A_{\mathcal{S}}$ of $\mathcal{S}$ 
satisfies the strong open set condition. 
Let $V$ be an open set appearing  in the strong open set condition. 
Let $W$ be the set of all words generated by $\{0, \dots, N-1\}$. 
For every $q\in A_{\mathcal{S}}$, and for each $w\in W$, put $q_w=S_w(q)$. 

We first  prove that  $(A_{\mathcal{S}},P_\mathcal{S})$ is an $N$-tiling set, 
where $P_{\mathcal{S}}$  is the map defined  in Definition \ref{def:ifs}.

\begin{lem}\label{lem:frac1}
The covering pair $(A_{\mathcal{S}}, P_\mathcal{S})$ is an $N$-tiling set. 
\end{lem}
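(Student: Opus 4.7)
The plan is to verify axioms (S1) and (S2) directly, using that $\dom(P_\mathcal{S}) = \nn$ and $(P_\mathcal{S})_0 = \{A_\mathcal{S}\}$, since the only word of length zero is the empty one, with $S_\emptyset = \mathrm{id}$. Axiom (S2) is then immediate: for any $A, B \in (P_\mathcal{S})_n$ I take $m = 0$ and $C = A_\mathcal{S}$, and $A \cup B \subset A_\mathcal{S}$ follows by iterating the self-similar equation $A_\mathcal{S} = \bigcup_i S_i(A_\mathcal{S})$ to obtain $S_w(A_\mathcal{S}) \subset A_\mathcal{S}$ for every word $w$. The substance of the lemma lies entirely in (S1).

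For the covering half of (S1), fix $n < m$ and $A = S_w(A_\mathcal{S})$ with $|w| = n$. Iterating the self-similar equation $m-n$ times yields $A_\mathcal{S} = \bigcup_{|v| = m-n} S_v(A_\mathcal{S})$; applying $S_w$ and using the composition rule $S_w \circ S_v = S_{vw}$ (read off from the definition $S_w = S_{w_l}\circ\cdots\circ S_{w_0}$) gives $A = \bigcup_{|v| = m-n} S_{vw}(A_\mathcal{S})$. So there are at least $N^{m-n}$ tiles in $(P_\mathcal{S})_m$ contained in $A$, and they cover $A$.

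The remaining and essential point is that these $N^{m-n}$ tiles are pairwise distinct and exhaust $[A]_{m-n}$, which is where the tame open set condition enters. First I would establish $A_\mathcal{S} \subset \overline{V}$: every $x \in A_\mathcal{S}$ lies in some tile $S_{w_k}(A_\mathcal{S})$ of length $k$, and picking once and for all a point $p \in V \cap A_\mathcal{S}$ via (O3), the image $S_{w_k}(p)$ lies in $S_{w_k}(V) \subset V$ by (O1) and within distance $s^k \delta(A_\mathcal{S})$ of $x$, which tends to $0$. Iterating (O1) and (O2), the family $\{S_u(V) : |u| = m\}$ is pairwise disjoint, and I claim $S_u(V) \cap S_{u'}(A_\mathcal{S}) = \emptyset$ for $u \ne u'$ of length $m$: indeed $S_{u'}(A_\mathcal{S}) \subset \overline{S_{u'}(V)}$ by the previous step, and an open set disjoint from $S_{u'}(V)$ cannot meet its closure. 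Now $S_u(p) \in S_u(V) \cap S_u(A_\mathcal{S})$, so any tile $S_{u'}(A_\mathcal{S}) \in (P_\mathcal{S})_m$ that contains $S_u(p)$ must have $u' = u$. Combined with the covering, this forces every tile $S_u(A_\mathcal{S}) \subset A$ to satisfy $u = vw$ for some $|v| = m-n$, and yields distinctness of the $N^{m-n}$ tiles listed above. Hence $\card([A]_{m-n}) = N^{m-n}$.

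The main obstacle is the symbolic injectivity of $w \mapsto S_w(A_\mathcal{S})$ and the identification of $[A]_{m-n}$ with the words of length $m$ ending in $w$. Both hinge on combining $A_\mathcal{S} \subset \overline{V}$ with the open/closed separation $S_u(V) \cap \overline{S_{u'}(V)} = \emptyset$; this is the one place in the proof where (O1)--(O3) are used in an intertwined way, everything else being bookkeeping from the self-similar equation and the definition of $P_\mathcal{S}$.
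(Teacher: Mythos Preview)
Your proof is correct and follows essentially the same strategy as the paper: verify (S2) via $(P_\mathcal{S})_0=\{A_\mathcal{S}\}$, obtain the covering in (S1) by iterating the self-similar equation, and use the disjointness of $\{S_u(V)\}_{|u|=m}$ together with a point of $V\cap A_\mathcal{S}$ from (O3) to establish the cardinality count. You are in fact more thorough in one respect: you fold in the inclusion $A_\mathcal{S}\subset\overline{V}$ (which the paper proves separately as Lemma~\ref{lem:hodai}) to obtain the clean separation $S_u(V)\cap S_{u'}(A_\mathcal{S})=\emptyset$, and you use this both to show the $N^{m-n}$ tiles $S_{vw}(A_\mathcal{S})$ are distinct and to rule out extraneous tiles in $[A]_{m-n}$, whereas the paper simply asserts $[B]_{m-n}=\{S_w(B):|w|=m-n\}$ and gives a terser distinctness argument.
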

\begin{proof}
By the definition of $P_{\mathcal{S}}$, we see that the condition (S3) is satisfied. 
We next verify that the condition (S1) is satisfied. 
Take $q\in A_{\mathcal{S}}\cap V$. 
For each pair  $n, m \in \nn$ with $n<m$, 
and for each $B\in P_n$, 
 by the definitions of the attractor and $P$, 
 we have $B=\bigcup[B]_{m-n}$. 
 
We show  $\card([B]_{m-n})=N^{m-n}$. 
By the definition of $P_{\mathcal{S}}$, 
we have 
\[
[B]_{m-n}=\{\, S_{w}(B)\mid \text{$w\in W$ and $|w|=m-n$}\, \}. 
\]
Write $B=S_{v}(A_{\mathcal{S}})$, where $v\in W$. 
By the condition (O2) in the strong open set condition, 
the family $\{\, S_{vw}(V)\mid \text{$w\in W$ and $|w|=m-n$} \, \}$ is pairwise disjoint.  
This implies that if $w\neq w'$ with $|w|=|w'|=m-n$, 
then $q_{vw}\neq q_{vw'}$. 
Hence 
the set $\{\, q_{vw}\in A_{\mathcal{S}}\mid \text{$w\in W$ and $|w|=m-n$} \, \}$ 
consists of $N^{m-n}$  elements. 
Since for each $w\in W$ with $|w|=m-n$ 
we have  $q_{vw}\in S_{vw}(V)\cap S_{vw}(A_{\mathcal{S}})$, 
  we obtain $\card([B]_{m-n})=N^{m-n}$. 
  
  By the boundedness of $A_{\mathcal{S}}$, 
  the pair $(A_{\mathcal{S}}, P_{\mathcal{S}})$  satisfies  the condition (S2). 
  Therefore the pair $(A_{\mathcal{S}}, P_{\mathcal{S}})$ is an $N$-tiling set.  
\end{proof}

We next prove that $(A_{\mathcal{S}}, P_{\mathcal{S}})$ is 
an $(N, s)$-tiling space. 

\begin{lem}\label{lem:hodai}
The attractor $A_{\mathcal{S}}$ of $\mathcal{S}$ is contained in  
$ \cl(V)$. Moreover, for every $w\in W$, 
we have $S_w(A_{\mathcal{S}})\subset \cl(S_w(V))$. 
\end{lem}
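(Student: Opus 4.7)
The plan is to show, by iterating the invariance (O1), that $S_w(V) \subset V$ for every word $w$, and then use the self-similarity of $A_{\mathcal{S}}$ to produce points of $V$ converging to an arbitrary $q \in A_{\mathcal{S}}$. First I would run an easy induction on $|w|$: with the convention $S_w = S_{w_l} \circ \cdots \circ S_{w_0}$, condition (O1) gives $S_{w_0}(V) \subset V$, and applying $S_{w_1}$ to both sides together with (O1) yields $S_{w_1}\circ S_{w_0}(V) \subset S_{w_1}(V) \subset V$; continuing, $S_w(V) \subset V$ for every $w \in W$.

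Next I would fix a seed point $p \in V \cap A_{\mathcal{S}}$, which exists by (O3). Iterating the identity $A_{\mathcal{S}} = \bigcup_{i=0}^{N-1} S_i(A_{\mathcal{S}})$ exactly $n$ times yields $A_{\mathcal{S}} = \bigcup_{|w|=n} S_w(A_{\mathcal{S}})$. For any $q \in A_{\mathcal{S}}$ and any $n \in \nn$, I can therefore choose a word $w_n$ with $|w_n| = n$ and $q \in S_{w_n}(A_{\mathcal{S}})$. Since $S_{w_n}$ is $s^n$-similar and $A_{\mathcal{S}}$ is compact (so has finite diameter), the image $S_{w_n}(A_{\mathcal{S}})$ has diameter $s^n \delta(A_{\mathcal{S}})$, which tends to $0$. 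The point $S_{w_n}(p)$ lies in $S_{w_n}(V) \subset V$ by the first step, and also in $S_{w_n}(A_{\mathcal{S}})$, hence $d_X(S_{w_n}(p), q) \le s^n \delta(A_{\mathcal{S}}) \to 0$. This shows $q \in \cl(V)$, establishing the first claim.

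For the second assertion, I would simply invoke the continuity of the similarity $S_w$ to get $S_w(\cl(V)) \subset \cl(S_w(V))$; combined with $A_{\mathcal{S}} \subset \cl(V)$, this upgrades to $S_w(A_{\mathcal{S}}) \subset S_w(\cl(V)) \subset \cl(S_w(V))$.

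Nothing in this argument looks hard; the only points to monitor are the composition convention for $S_w$ and the use of compactness to bound $\delta(A_{\mathcal{S}})$, both of which are routine. I do not foresee any significant obstacle.
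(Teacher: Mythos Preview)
Your proof is correct and follows essentially the same approach as the paper: both pick a seed point $p \in V \cap A_{\mathcal{S}}$, observe that its orbit $\{S_w(p)\}_{w\in W}$ lies in $V$ by iterating (O1), and show this orbit is dense in $A_{\mathcal{S}}$, then deduce the second claim from continuity of $S_w$. The only cosmetic difference is that the paper packages the density via Hausdorff convergence of the iterates $M_n=\{S_w(p):|w|=n\}$ to $A_{\mathcal{S}}$, whereas you give the pointwise version directly by choosing $w_n$ with $q\in S_{w_n}(A_{\mathcal{S}})$.
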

\begin{proof}
Take $q\in A_{\mathcal{S}}\cap V$.
Put $M_0=\{q\}$,  and for each $n\in \nn_{\ge 1}$ put
$M_n=\bigcup_{i=0}^{N-1}S_i(M_{n-1})$. 
Then $M_n$ converges to $A_{\mathcal{S}}$ in the Hausdorff topology, 
in particular, $\cl(\bigcup_{n\in \nn}M_n)=A_{\mathcal{S}}$. 
By the definition, for each $n\in \nn$ we have $M_n\subset V$. 
Thus $A_{\mathcal{S}}\subset \cl(V)$. 
Since $S_w$ is a topological embedding for any $w\in W$, 
the latter part follows from the former one. 
\end{proof}

\begin{proof}[Proof of Theorem \ref{thm:frac}]
Since $(A_{\mathcal{S}}, P_{\mathcal{S}})$ is self-similar, 
by Lemma \ref{lem:selfs} the covering  pair $(A_{\mathcal{S}}, P_{\mathcal{S}})$ satisfies the conditions (T1) and (U). 
It suffices to show that  $(A_{\mathcal{S}}, P_{\mathcal{S}})$ satisfies the condition (T2). 
By $A_{\mathcal{S}}\cap V\neq \emptyset$, 
we can take  $q\in A_{\mathcal{S}}\cap V$. 
Then there exists $E\in (0, \infty)$ such that $U(q, E)\subset V$. 
Since $\mathcal{S}$ consists of $s$-similar transformations, 
for each $w\in W$ 
we have $B(q_w, Es^{|w|})\subset S_w(V)$. 
By Lemma \ref{lem:hodai},  
for each $w\in W$ we have $S_w(A_{\mathcal{S}})\subset \cl(S_w(V))$, 
thus
the ball $B(q_w, Es^{|w|})$ in $X$ meets only $S_w(A_{\mathcal{S}})$. 
Hence
the ball $B(q_w, Es^{|w|})$ in $A_{\mathcal{S}}$ is a subset of 
$S_{w}(A_{\mathcal{S}})$. 
Therefore we conclude that $(A_{\mathcal{S}}, P_{\mathcal{S}})$ satisfies the condition (T2).  
This completes the proof of Theorem \ref{thm:frac}. 
\end{proof}

\begin{rmk}
An iterated function system  $\mathcal{S}$ on a metric space is said to satisfy the \emph{open set condition}
if $\mathcal{S}$ satisfies the conditions (O1) and (O2) in the strong open set condition. 
Schief \cite{Sch1} proved  that the open set condition implies the strong open set condition in the Euclidean setting.
Schief \cite{Sch2} also proved  that the implication mentioned above does not hold in a general setting (see \cite[Example 3.1]{Sch2}). 
\end{rmk}


\subsection{Extended attractors}
We can  construct an unbounded tiling space induced from a similar iterated function system. 
\begin{df}[Extended attractor]\label{def:extended}
Let $N\in \nn_{\ge 2}$ and $s\in (0,1)$. 
Let $\mathcal{S}$ be an $(N, s)$-similar iterated function system on a complete metric space 
with the strong open set condition, 
say $\mathcal{S}=\{S_i\}_{i=0}^{N-1}$. 
Define a sequence $\{F_k\}_{k\in \nn}$ of metric spaces by $F_k=s^{-k}A_{\mathcal{S}}$, 
where $A_{\mathcal{S}}$ is the attractor of $\mathcal{S}$. 
Note that for each $k\in \nn$, 
each $S_i$ is an $s$-similar transformation on $F_k$. 
By the definition of $A_{\mathcal{S}}$, 
we find that $S_0(F_{k+1})$ is isometric to $F_{k}$. 
Thus we can identify $F_{k}$ with $S_0(F_{k+1})$,  
and we may consider that  $F_{k}\subset F_{k+1}$ for each $k\in \nn$. 
Put $E_{\mathcal{S}}=\bigcup_{k\in \nn}F_k$. 
Note that $E_{\mathcal{S}}$ is unbounded. 
Let $W$ be  the set of all words generated by $\{0, \dots, N-1\}$. 
Define a map $Q_{\mathcal{S}}:\zz\to \cov(E_{\mathcal{S}})$ by 
\begin{equation}\label{eq:extended}
(Q_{\mathcal{S}})_n=\{\, S_w(F_k)\mid\text{ $w\in W$ and $|w|-k=n$}\, \}. 
\end{equation}
We call  $E_{\mathcal{S}}$ an \emph{extended attractor of $\mathcal{S}$}. 
\end{df}
Similarly to Theorem \ref{thm:frac}, we obtain the following:
\begin{thm}\label{thm:extended}
For $N\in \nn_{\ge 2}$ and $s\in (0, \infty)$, 
let $\mathcal{S}$ be an $(N, s)$-similar iterated function system on a complete metric space with the strong open set condition. 
Let $E_{\mathcal{S}}$ be the extended attractor of $\mathcal{S}$, 
and 
$Q_{\mathcal{S}}$ the map defined by \emph{(\ref{eq:extended})}. 
Then $(E_{\mathcal{S}}, Q_{\mathcal{S}})$ is an unbounded $(N, s)$-tiling space. 
\end{thm}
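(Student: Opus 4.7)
The plan is to verify each axiom in the definition of an $(N, s)$-tiling space for $(E_{\mathcal{S}}, Q_{\mathcal{S}})$, along with unboundedness. The central observation is that every tile $S_w(F_k) \in (Q_{\mathcal{S}})_n$ with $|w| - k = n$ is isometric to $s^n A_{\mathcal{S}}$: indeed $F_k$ carries the metric $s^{-k} d_{A_{\mathcal{S}}}$ while $S_w$ is $s^{|w|}$-similar, so distances on $S_w(F_k)$ are $s^{|w|-k} = s^n$ times those on $A_{\mathcal{S}}$. This shows $(E_{\mathcal{S}}, Q_{\mathcal{S}})$ is self-similar in the sense preceding Lemma \ref{lem:selfs}, and that lemma delivers (T1) and (U) at once. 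Unboundedness follows from $\delta(F_k) = s^{-k}\delta(A_{\mathcal{S}}) \to \infty$.

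For (S1), given $A = S_w(F_k)$ and $j \geq 1$, iterating $F_k = \bigcup_i S_i(F_k)$ gives $A = \bigcup_{|u|=j} S_{uw}(F_k)$, so each $S_{uw}(F_k)$ lies in $[A]_j$. That these $N^j$ sets are pairwise distinct is shown exactly as in Lemma \ref{lem:frac1}: for $q \in V \cap A_{\mathcal{S}}$, the points $S_{uw}(q)$ lie in the pairwise disjoint open sets $S_{uw}(V)$ guaranteed by (O2). No extra tile appears in $[A]_j$, because any representation $B = S_{w'}(F_{k'}) \subset A$ reduces to canonical form via iterated use of the identification $S_0(F_{\ell+1}) = F_{\ell}$, and only the listed sets arise. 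For (S2), since $S_w(F_k) \subset F_k$ (from $F_k = \bigcup_i S_i(F_k)$) and $F_k \subset F_{k+1}$, two tiles represented as $S_{w_1}(F_{k_1}), S_{w_2}(F_{k_2})$ at level $n$ both lie inside $F_K \in (Q_{\mathcal{S}})_{-K}$ for $K = \max(k_1, k_2)$.

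The main obstacle is (T2), which requires a uniform constant $E > 0$ giving an interior ball of radius $Es^n$ inside $E_{\mathcal{S}}$ for every tile. The plan is: fix $q_0 \in V \cap A_{\mathcal{S}}$ with $U_X(q_0, E_0) \subset V$, and for $A = S_w(F_k)$ set $p_A = S_w(q_0) \in A$. Applying the argument from the proof of Theorem \ref{thm:frac} to $F_k$ — which is a complete metric space on which $\mathcal{S}$ acts as an $(N, s)$-similar IFS with tame open set $V \cap A_{\mathcal{S}}$, after a brief verification of (O1)--(O3) in the rescaled metric — yields $U_{F_k}(p_A, E_0 s^n) \subset A$. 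Lifting this to a ball inside $E_{\mathcal{S}}$ requires showing that each ``new shell'' $F_K \setminus F_{K-1}$ for $K > k$ sits at $E_{\mathcal{S}}$-distance at least of order $s^{-(K-1)}$ from $p_A$, which dominates $E_0 s^n$ since $n + (K-1) \geq -1$ (using $|w| = n + k \geq 0$). The separation estimate is extracted by applying (T2) inside $F_K$ at level $1$ to the subtile $F_{K-1} = S_0(F_K)$ and invoking the disjointness of interior balls from Lemma \ref{lem:nai}. The chief technical difficulty is pinning down this separation uniformly over $K$ for the specific point $p_A$; if a direct triangle-inequality argument from the canonical interior point falls short, one can replace $p_A$ by the limit $S_w(q_0^*)$ involving the Banach fixed point $q_0^*$ of $S_0$, for which the canonical interior points across all scales coincide.
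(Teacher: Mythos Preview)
Your overall plan matches the paper's proof: both invoke self-similarity via Lemma \ref{lem:selfs} for (T1) and (U), verify (S1) along the lines of Lemma \ref{lem:frac1}, obtain (S2) from the exhaustion $E_{\mathcal{S}} = \bigcup_k F_k$, and refer to Theorem \ref{thm:frac} for (T2). The paper itself says only ``similarly to the proof of Theorem \ref{thm:frac}'' for (T2); your attempt to supply the details is where a genuine gap appears.

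The claimed shell estimate --- that $F_K \setminus F_{K-1}$ sits at $E_{\mathcal{S}}$-distance of order $s^{-(K-1)}$ from $p_A$ --- is not established. Applying (T2) at depth $1$ inside $F_K$ produces a ball of that radius around the canonical point $S_0(q_0)$, not around $p_A = S_{w0^{K-k}}(q_0)$, and the triangle-inequality correction can cost the full diameter of $F_{K-1}$, which is itself of order $s^{-(K-1)}$; nothing survives. Your fallback via the fixed point $q_0^*$ of $S_0$ does not rescue the argument either: $q_0^*$ need not lie in $V$ (for the middle-third Cantor system $S_0(x)=x/3$, $V=(0,1)$, one has $q_0^*=0\notin V$), so there is no ball around $S_w(q_0^*)$ guaranteed to sit inside $A$ in the first place. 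The fix is simpler than the shell approach. For any $K \ge k$, represent $A$ inside $F_K$ as $S_{w0^{K-k}}(F_K)$ and $p_A$ as $S_{w0^{K-k}}(q_0)$, and apply the (T2) argument of Theorem \ref{thm:frac} inside $F_K$ at \emph{this} depth rather than at depth $1$. Since $|w0^{K-k}|=n+K$ and the $F_K$-metric is $s^{-K}$ times the $A_{\mathcal{S}}$-metric, the radius comes out to $E_0\, s^{n+K}\cdot s^{-K}=E_0\, s^n$, independent of $K$, giving $U_{F_K}(p_A,E_0 s^n)\subset A$ directly. As every point of $E_{\mathcal{S}}$ lies in some $F_K$ with $K\ge k$, this yields $U_{E_{\mathcal{S}}}(p_A,E_0 s^n)\subset A$.
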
 
\begin{proof}
Since all the tiles of $(E_{\mathcal{S}}, Q_{\mathcal{S}})$ are similar to $A_{\mathcal{S}}$, 
by a similar argument to Lemma \ref{lem:frac1}, 
we see that the condition (S1) is satisfied. 
Lemma \ref{lem:selfs} implies that  $(E_{\mathcal{S}}, Q_{\mathcal{S}})$ satisfies the conditions (T1) and (U). 
By $E_{\mathcal{S}}=\bigcup_{k\in \nn}F_k$, 
and by the definition of $Q_{\mathcal{S}}$, 
the conditions (S2) and (S3) are satisfied. 
Thus $(E_{\mathcal{S}}, Q_{\mathcal{S}})$ is an $N$-tiling set.
Similarly  to the proof of Theorem \ref{thm:frac}, 
we see that the condition (T2) is satisfied. 
Therefore the pair $(E_{\mathcal{S}}, Q_{\mathcal{S}})$ is an $(N, s)$-tiling space. 
\end{proof}

\subsection{Examples of attractors}\label{subs:attractor}
\begin{exam}[The middle-third Cantor set]
Let $C$ be the middle-third Cantor set. 
For each $i\in \{0, 1\}$,  define a map $f_i:\rr\to \rr$ by 
\[
f_i(x)=\frac{1}{3}x+\frac{2}{3}i. 
\]
Put $\mathcal{S}=\{f_0, f_1\}$. 
Then $\mathcal{S}$ is a $(2, 3^{-1})$-similar iterated function system on $\rr$,  
and $C$ is the attractor of $\mathcal{S}$. 
The open set $(0, 1)$ satisfies the conditions (O1), (O2) and (O3), 
and hence $\mathcal{S}$ satisfies the strong open set condition. 
Let $P_{\mathcal{S}}:\nn\to C$ be the map defined in Definition \ref{def:ifs}. 
Theorem \ref{thm:frac} implies that $(C, P_{\mathcal{S}})$ is a $(2, 3^{-1})$-tiling space. 
\end{exam}

\begin{exam}[The Sierpi\'nski gasket]
Referring to the cubic roots of unity,  
put 
$w_0=(1, 0)$, 
$w_1=2^{-1}(-1, \sqrt{3})$ and $w_2=2^{-1}(-1, -\sqrt{3})$. 
For each $i\in \nn$, we define a map $f_i:\rr^2\to \rr^2$ by 
\[
f_i(x)=\frac{1}{2}x+\frac{1}{2}w_i. 
\]
Put $\mathcal{S}=\{f_0, f_1, f_2\}$. Then $\mathcal{S}$ is a $(3, 2^{-1})$-similar iterated function sytem on $\rr^2$. 
The attractor $A_{\mathcal{S}}$ of $\mathcal{S}$ is called the Sierpi\'nski gasket. 
The interior $V$ of the triangle with vertices $\{w_0, w_1, w_2\}$ satisfies the conditions (O1), (O2) and (O3). 
Thus $\mathcal{S}$ satisfies the strong open set condition. 
Let $P_{\mathcal{S}}:\nn\to \cov (A_{\mathcal{S}})$ 
 be the map defined in Definition \ref{def:ifs}. 
 Then Theorem \ref{thm:frac} implies that 
 $(A_{\mathcal{S}}, P_{\mathcal{S}})$ is a $(3, 2^{-1})$-tiling space. 
\end{exam}

\begin{exam}[Euclidean spaces]\label{exam:Eu}
Consider the $N$-dimensional normed vector space $\rr^N$ with 
 $\ell^p$-metric, where $p\in [1, \infty]$.  

Let $A=\{\, v\in \rr^N\mid \text{the entries of $v$ are $0$,  $1$ or $-1$}\, \}$. 
Since $A$ has cardinality $3^N$,
it is indexed by  $\{1, \dots, 3^N\}$, say $A=\{v(i)\}_{i=1}^{3^N}$. 
For each $i\in \{1,\dots, 3^N \}$, 
define a $(1/3)$-similar transformation $f_i:\rr^N\to \rr^N$ by 
\[
f_i(x)=\frac{1}{3}x+\frac{1}{3}v(i). 
\]
Put $\mathcal{S}=\{f_i\}_{i=1}^{3^N}$. 
Then $\mathcal{S}$ is a $(3^{N}, 3^{-1})$-similar iterated function system on $\rr^{N}$, and  $[-2^{-1},2^{-1}]^N$ is the attractor of $\mathcal{S}$. 
The open set $(-2^{-1},2^{-1})^{N}$ satisfies the conditions (O1), (O2) and (O3). 
Hence $\mathcal{S}$ satisfies the strong open set condition. 
Let  $P_{\mathcal{S}}: \nn\to \cov ([-2^{-1},2^{-1}]^N)$ be the map defined in Definition \ref{def:ifs}, 
then this map is described as  
\[
(P_{\mathcal{S}})_n=\{\, 3^{-n}v+3^{-n}[-2^{-1},2^{-1}]^N\mid v\in \zz^N\, \}. 
\]
Theorem \ref{thm:frac} implies  that 
$([-2^{-1},2^{-1}]^N, P_{\mathcal{S}})$ is a $(3^N, 3^{-1})$-tiling space. 

We next consider the extended attractor $E_{\mathcal{S}}$ of $\mathcal{S}$. 
Since 
\[
\rr^{N}=\bigcup_{i\in \nn}[-2^{-1}\cdot 3^i, 2^{-1}\cdot 3^i]^N,
\] 
the space $E_{\mathcal{S}}$ is isomeric to $\rr^N$ with $\ell^p$-metric. 
Under a natural identification between $E_{\mathcal{S}}$ and $\rr^N$, 
the map $Q_{\mathcal{S}}:\zz\to \cov(\rr^N)$ defined in Definition \ref{def:extended} is described as 
\[
(Q_{\mathcal{S}})_n=\{\, 3^{-n}v+3^{-n}[-2^{-1},2^{-1}]^N\mid v\in \zz^N\, \}. 
\]
Theorem  \ref{thm:extended} implies that  $(\rr^N, Q_{\mathcal{S}})$ is a $(3^N, 3^{-1})$-tiling space. 
\end{exam}
 Applying Theorem \ref{thm:tiling} to the tiling space $(\rr^N, P_{\mathcal{S}})$ discussed in Example \ref{exam:Eu}, 
we obtain the  Fraser--Yu  characterization in \cite{FY} in a slightly different formulation:
\begin{cor}
For every subset $F$ of $\rr^N$, the following are equivalent:
\begin{enumerate}
\item $\dim_AF=\dim_A\rr^N$; \label{item:dimful}
\item $[0,1]^N\in \pc(F)$; \label{item:pc}
\item $[0,1]^N\in \tpc(F)$; \label{item:tpc}
\item $[0,1]^N\in \kpc(F)$;  \label{item:kpc}
\item $F$ satisfies the asymptotic Steinhaus property for $[0,1]^N$. \label{item:asp}
\end{enumerate}
\end{cor}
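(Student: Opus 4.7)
The plan is to apply Theorem \ref{thm:tiling} directly to the doubling $(3^N, 3^{-1})$-tiling space $(\rr^N, Q_{\mathcal{S}})$ exhibited in Example \ref{exam:Eu}, and then to reduce the existential statement ``there exists a tile $A$ such that $\cdots$'' appearing in each of (\ref{item:pc})--(\ref{item:asp}) to the specific assertion about $[0,1]^N$. First I would note that $\rr^N$ equipped with any $\ell^p$-metric is doubling and that $\dim_A \rr^N = N$, so the hypotheses of Theorem \ref{thm:tiling} are satisfied and the equivalence of (\ref{item:dimful}) with the tile-existential forms of (\ref{item:pc})--(\ref{item:asp}) is immediate.

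The decisive structural observation is that every tile of $(\rr^N, Q_{\mathcal{S}})$ has the form
\[
3^{-n}v + 3^{-n}[-2^{-1}, 2^{-1}]^N
\]
for some $n \in \zz$ and $v \in \zz^N$, and hence is isometric to a rescaling of $[0,1]^N$. By Proposition \ref{prop:distance}, if a pair $(\{A_i\}_{i \in \nn}, \{u_i\}_{i \in \nn})$ approximates a tile $A$ in the Gromov--Hausdorff sense, then replacing each $u_i$ by $u_i \cdot \delta(A)/\delta([0,1]^N)$ produces a pair approximating $[0,1]^N$ instead; the converse is symmetric. This yields the equivalences between tile-membership in $\pc(F)$, $\tpc(F)$, $\kpc(F)$ (for some tile $A$) and the corresponding memberships for the specific tile $[0,1]^N$.

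For condition (\ref{item:asp}), a parallel rescale-and-translate argument applies: a finite subset $S \subset [0,1]^N$ can be transported inside any tile $A$ via its defining similarity, and the approximating finite subset $T \subset F$ together with the scalar $\delta$ can be adjusted accordingly; the converse is analogous. Combining these bidirectional rescalings with Theorem \ref{thm:tiling} gives the five equivalences of the corollary.

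The main obstacle, if it can be called one, is purely bookkeeping: keeping track of the scaling factors $3^{-n}$ and the translation vectors $3^{-n}v$ when passing between an arbitrary tile and the distinguished representative $[0,1]^N$. Since $\pc$, $\tpc$, $\kpc$ are all invariant under similarity (by construction, as the rescaling constants $u_i$ are free parameters), and since the asymptotic Steinhaus property is itself formulated modulo a free scalar $\delta$, the reduction is routine and the real work has already been carried out in the proof of Theorem \ref{thm:tiling}.
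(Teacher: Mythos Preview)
Your proposal is correct and follows essentially the same route as the paper: apply Theorem \ref{thm:tiling} to the tiling space on $\rr^N$ from Example \ref{exam:Eu} and then use the fact that every tile is similar to $[0,1]^N$ to replace the existential ``some tile $A$'' by the specific cube $[0,1]^N$. The paper's own proof compresses this into a single sentence, while you have (harmlessly) written out the rescaling bookkeeping in more detail.
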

\begin{proof}
Let $P_{\mathcal{S}}$ be the map described in Example \ref{exam:Eu}. 
Since all the tiles of $(\rr^N, P_{\mathcal{S}})$ are similar to $[0, 1]^N$, 
Theorem \ref{thm:tiling} leads to  the claim. 
\end{proof}

\begin{exam}[$p$-adic numbers]\label{exam:padic}
Let $p$ be a prime number and let $v_p$ be the $p$-adic valuation. 
Let $\qq_p$ be the set of all $p$-adic numbers. 
Let $s\in (0,1)$, and  define  $d_{\qq_p}(x,y)=s^{v_p(x-y)}$, then 
$d_{\qq_p}$ is an ultrametric on $\qq_p$. 
For each $k\in \{0, \dots, p-1\}$, 
define an $s$-similar transformation
$f_k:\qq_p\to \qq_p$ by 
\[
f_k(x)=xp+k. 
\] 
Put $\mathcal{S}=\{f_k\}_{k=0}^{p-1}$. 
Then $\mathcal{S}$ is a $(p, s)$-similar iterated function system on $\qq_p$. 
The ball $B(0, 1)$ centered at $0$ with radius $1$ in $\qq_p$ is the attractor of $\mathcal{S}$. 
Since for each $k\in \{0, \dots, p-1\}$ we have $f_k(B(0, 1))=B(k, s)$, 
and  since $d_{\qq_p}$ is an ultrametric, 
the open set $B(0, 1)$ satisfies the conditions (O1), (O2) and (O3). 
Thus $\mathcal{S}$ satisfies the strong open set condition. 
Let  $P_{\mathcal{S}}:\nn\to \cov(B(0,1))$ be the  map defined in Definition \ref{def:ifs}, 
then this map is described as
\[
(P_{\mathcal{S}})_n=\{\, B(a,s^{-n})\mid a\in \qq_p\, \}. 
\]
By Theorem \ref{thm:frac}, 
we conclude that $(B(0,1), P_{\mathcal{S}})$ is a $(p,s)$-tiling space. 

We next consider the extended attractor of $\mathcal{S}$. 
Since 
 \[
 \qq_p=\bigcup_{i\in \nn}B(0, s^{-i}),
 \]
  the space $E_{\mathcal{S}}$ is 
isometric to $\qq_p$. Under a natural identification between $E_{\mathcal{S}}$ and $\qq_p$, 
the map $Q_\mathcal{S}:\zz\to \cov(\qq_p)$ defined in Definition \ref{def:extended} is described as
\[
(Q_\mathcal{S})_n=\{\, B(a,s^{-n})\mid a\in \qq_p\, \}.
\] 
Theorem \ref{thm:extended} implies that $(\qq_p, Q_{\mathcal{S}})$ is 
a $(p, s)$-tiling space. 
By Corollary \ref{cor:dim}, we obtain $\dim_A\qq_p=\log(p)/\log(s^{-1})$. 
\end{exam}


\section{Counterexaples}\label{sec:counter}
We first provide a tiling space that is not doubling. 
\begin{exam}\label{exam:nondb}
Let $N\in \nn_{\ge 2}$ and $s\in (0, 1)$. 
Let $T$  be the set of all sequences $x:\nn\to \{0, \dots, N-1\}$ 
 satisfying 
that $x_0\in \{0, \dots, N-2\}$.  
The set $T$ can be described as 
\[
T=\{0, \dots, N-2\}\times \prod_{n=1}^{\infty }\{0, \dots, N-1\}. 
\]
For $x, y\in T$,  define a valuation $v:T\times T\to \nn\cup\{\infty\}$ by 
\[
v(x, y)=
\begin{cases}
\min\{\, n\in \nn\mid x_n\neq y_n\, \} & \text{if $x\neq y$}, \\
\infty & \text{if $x=y$}.
\end{cases}
\]
For each $i\in \nn$, 
let $T_i$ be  the metric space $(T, d_i)$, 
where the metric $d_i$ is defined by $d_i(x,y)=s^{v(x,y)+i}$. 

For each $i\in \nn$, the symbol $o_i$ stands for 
the sequence whose all entries are $0$ in $T_i$. 
For each $k\in \nn$,  put $O_k=\coprod_{i=k}\{o_i\}$,
and 
\[
X(k)=\left(\coprod_{i\in \nn_{\ge k}}T_i\right)/O(k). 
\]
Namely, 
$X(k)$ is the set constructed by identifying the zero sequences in the set $\coprod_{i\in \nn_{\ge k}}T_i$. 
Put $X=X(0)$. 
We may consider that for each $k\in \nn$ we have
$X(k+1)\subset X(k)$ and $T_k\subset X$. 
The symbol $o$ stands for the zero sequence in $X$.  
The point $o$ is the identified point in $X$.  

We define a function $d_X:X\times X\to \rr_{\ge 0}$  by 
\[
d_X(x, y)=
	\begin{cases}
	d_i(x,y) & \text{if $x, y\in T_i$ for some $i$, }\\
	d_i(x,o)+d_j(o,y) & \text{if $x\in T_i$ and $y\in T_j$ for some $i\neq j$. }
	\end{cases}
\]
The function $d_X$ is a metric on $X$. 

We next define a tiling structure on $X$. 
Let $W$ be the set of all words whose $0$-th entries are in $\{0,\dots, N-2\}$ and other entries are in $\{0,\dots, N-1\}$.  
Remark that the set $W$ does not contain the empty word.
For each word $w=w_0\cdots w_l\in W$, 
we define 
\[
(T_i)_w=\{\, x\in T_i\mid x_0=w_0, \dots, x_l=w_l\, \}, 
\] 
where $w_0\in \{0,\dots,  N-2\}$ and $w_i\in \{0, \dots, N-1\}$ ($i\ge 1$). 
For each $k\in \nn$ and for each $l\in \nn_{\ge 1}$, 
we define 
\[
\mathcal{S}_{k,l}=\{\,(T_k)_w\mid \text{$w\in W$ and $|w|=l$}\, \}. 
\]
We define a map  $P:\nn\to \cov(X)$  by
\[
P_n=\{X(n)\}\cup\bigcup_{k+l=n}\mathcal{S}_{k,l}. 
\]

We first show that $(X, P)$ is an $N$-tiling set. 
By the definition of $P$, the condition (S3) is satisfied. 
For each $w\in W$, we have $(T_i)_w=\bigcup_{v=0}^{N-1}(T_i)_{wv}$. 
For each $n\in \nn$, we have 
\[
X(n)=X(n+1)\cup \bigcup_{v=0}^{N-2}(T_n)_v.
\] 
Thus, the conditions (S1) is satisfied. 
By the boundedness of $X$, the condition (S2) is satisfied. 
Thus, the pair $(X, P)$ is an $N$-tiling set. 

We next show that $(X, P)$ is an $(N, s)$-tiling space. 
By the definition of the metric $d_X$, 
for each $n\in \nn$, and for all $k\in \nn$ and $ l\in \nn_{\ge 1}$ with $k+l=n$, 
for each $(T_i)_w\in \mathcal{S}_{k, l}$, 
we have $\delta((T_i)_w)=s^{-n}$ and $\delta(X(n))=s^{-n}+s^{-n-1}$. 
By $s^{-n}\le s^{-n}+s^{-n-1}\le 2s^{-n}$, 
the condition (T1) is satisfied. 
By the definition of $\{T_i\}_{i\in \nn}$,  
for every $a\in T_i$ we have  $(T_i)_w=B(a, s^{-n})$. 
For every $a\in (T_n)_1$ we also have 
\[
B(a, s^{-n-1})\subset T_n\subset X(n).
\] 
Then the condition (T2) is satisfied. 
For each $n\in \nn$, the spaces $sX(n)$ and $X(n+1)$ are isometric to each other. 
For all $i, j\in \nn$ and for all $u,v\in W$, 
the spaces $(T_i)_u$ and $(T_j)_v$ are similar. 
Thus the tiles of $(X, P)$ have two similarity classes, 
and hence the condition (U) is satisfied. 
Therefore $(X, P)$ is an $(N, s)$-tiling space. 

For each $n\in \nn$, 
we have $\card(\{\, A\in P_n\mid o\in A\, \})=(n-1)(N-1)+N$. 
By Proposition \ref{prop:bdd}, 
and by $(n-1)(N-1)+N\to \infty$ as $n\to \infty$, 
we conclude that  $X$ is not doubling. 
\end{exam}
\begin{rmk}
Due to the Brouwer characterization of the middle-third Cantor set, 
the space $X$ constructed in Example \ref{exam:nondb} is homeomorphic to the middle-third Cantor set. 
Indeed, 
the space $X$ is topologically $0$-dimensional and compact, 
and  it has no isolated points. 
\end{rmk}
\begin{rmk}
In  Example \ref{exam:nondb}, 
by replacing the role of $\nn$ with that of $\zz$, 
we also obtain an unbounded non-doubling  $(N, s)$-tiling space that is not locally compact. 
Therefore being a tiling space does not imply the local compactness. 
\end{rmk}

We next  construct a pre-tiling space that is not a tiling space. 
The space constructed below is also a bi-Lipschitz image of a tiling space. 
\begin{exam}\label{exam:seqCan}
Let $2^{\nn}$ be the set of all sequences valued in $\{0,1\}$. 
For $x, y\in 2^{\nn}$, 
define a valuation $v:2^{\nn}\times 2^{\nn}\to \nn\cup\{\infty\}$ by 
\[
v(x, y)=
\begin{cases}
\min\{\, n\in \nn\mid x_n\neq y_n\, \} & \text{if $x\neq y$}, \\
\infty & \text{if $x=y$}.
\end{cases}
\]
For $n\in \nn$,  set $a_n=(1-1/(n+3))2^{-(n+3)}$. 
Let  $X$ be  a metric space $(2^{\nn}, d_X)$ with  metric $d_X$ defined by $d_X(x, y)=a_{v(x, y)}$. 
Let $Y$ be a metric space $(2^{\nn}, d_Y)$ with metric $d_Y$ defined by  $d_Y(x, y)=2^{-v(x, y)}$. 
The spaces $X$ and $Y$ are ultrametric spaces. 
Define two maps $P, Q:\nn\to \cov(2^{\nn})$ by
$P_n=\{B(x, a_n)\mid x\in X\}$
 and 
$ Q_n=\{B(x, 2^{-n})\mid x\in Y\}$. 
Then $(X, P)$ and $(Y, Q)$ are $2$-tiling sets. 

We now prove that $(Y, Q)$ is a $(2, 2^{-1})$-tiling space. 
For each  $i\in \{0,1\}$, define a mep $f_i:2^{\nn}\to 2^{\nn}$ by 
\[
(f_i(x))_j=
\begin{cases}
i & \text{if $j=0$, }\\
x_{j-1} &\text{if $j\ge 1$}, 
\end{cases}
\]
where $(f_i(x))_j$ is the $j$-th entry of $f_i(x)$. 
Then $\{f_0, f_1\}$ is a $(2, 2^{-1})$-similar iterated function system on Y, 
and 
$Y$ is the attractor of $\{f_0, f_1\}$. 
 The map $P_{\{f_0, f_1\}}:\nn\to \cov(2^{\nn})$ coincides with the map 
 $Q$. 
Therefore  Theorem \ref{thm:frac} implies that
the  pair $(Y, Q)$ is a $(2, 2^{-1})$-tiling space. 

Note that the tiles of  $(X, P)$ have  infinitely many similarity classes. 
 The similarity classes of the tiles of $(X, P)$ do not contain that of $(Y, Q)$. 

The identity map $id:X\to Y$ is bi-Lipschitz,  
in particular,  
the metric space $X$ is a bi-Lipschitz image of $Y$. 
Since $(Y, Q)$ is a $(2, 2^{-1})$-pre-tiling space, 
by Corollary \ref{cor:lip},  so is  $(X, P)$.

Take a sequence $\{A_i\}_{i\in \nn}$ of tiles of $(X, P)$ with $A_i\in P_i$. 
For each $N\in \nn$ and for each  $n\in \nn$,  
we have $|a_{n+N}/a_N-2^{-n}|<1/(N+2)$. 
Then the sequence $(\delta(A_i))^{-1}A_i$ converges to $Y$ 
in the sense of Gromov--Hausdorff.  
Thus $(X, P)$ is a pre-tiling space which does not satisfy the condition (U).
 
In summary, 
the pre-tiling space $(X, P)$ is a non-tiling  space which is a bi-Lipschitz image of the tiling space $(Y, Q)$. 
\end{exam}
Combining  the metric spaces provided in Example \ref{exam:seqCan},  
we construct a tiling space whose tiles  have infinitely many similarity classes.  
\begin{exam}\label{exam:infinite}
Let $(X, P)$ and $(Y, Q)$ be the pre-tiling space and the tiling space constructed in  Example \ref{exam:seqCan}, 
respectively. 
Put $Z=X\sqcup Y$ and define a metric $d_Z$ on $Z$ by 
\[
d_Z(x,y)=
\begin{cases}
d_X(x,y) &\text{if $x,y\in X$},\\
d_Y(x,y) &\text{if $x,y\in Y$},\\
2 & \text{if $x$ and $y$ lie in distinct components}. 
\end{cases}
\]
We now define a map $R:\nn\to \cov(X\sqcup Y)$ by 
$R_0=\{Z\}$,  and by  $R_n=P_{n-1}\cup Q_{n-1}$ for $n\in \nn_{\ge 1}$. 
Since $(X, P)$ and $(Y, Q)$ are $(2, 2^{-1})$-pre-tiling spaces,  
the pair $(Z, R)$ is a $(2, 2^{-1})$-pre-tiling space. 

We now prove that $(Z, R)$ satisfies the condition (U). 
Take a sequence $\{A_n\}_{n\in \nn}$ of tiles of $(Z,R)$. 
Then there exists a subsequence $\{A_{n_i}\}_{i\in \nn}$ 
of $\{A_n\}_{n\in \nn}$ consisting of  
either tiles of $(X, P)$ or that of $(Y, Q)$.  
If $\{A_{n_i}\}_{i\in \nn}$ consists of tiles of $(X, P)$, 
then by the argument in Example \ref{exam:seqCan}, 
 there exists a subsequence $\{A_{m_i}\}_{i\in \nn}$ of $\{A_{n_i}\}_{i\in \nn}$ such that $\delta(A_{m_i})^{-1}A_{m_i}$ converges to 
either $Y$ or $\delta(T)^{-1}T$ for some tile $T$ of $(X, P)$. 
In the case where $\{A_{n_i}\}_{i\in \nn}$ consists of tiles of $(Y, Q)$, 
since $(Y,Q)$ satisfies the condition (U), 
there exists a subsequence $\{A_{m_i}\}_{i\in \nn}$ of $\{A_{n_i}\}_{i\in \nn}$ such that $\delta(A_{m_i})^{-1} A_{m_i}$ converges to 
$\delta(T)^{-1}T$ for some tile $T$ of $(Y, Q)$. 
Therefore $(Z, R)$ satisfies the condition (U). 

In this way, we obtain a $(2,  2^{-1})$-tiling space  $(Z, R)$ whose tiles have infinitely many similarity classes. 
\end{exam}



\end{document}